\DeclareMathOperator{\et}{\acute{e}t}
\DeclareMathOperator{\dR}{dR}
\DeclareMathOperator{\ab}{\text{ab}}
\DeclareMathOperator{\Sel}{Sel}
\DeclareMathOperator{\Spec}{Spec}
\DeclareMathOperator{\Li}{Li}
\DeclareMathOperator{\li}{li}
\DeclareMathOperator{\loc}{loc}
\renewcommand{\AA}{\mathbb{A}}
\newcommand{\FF}{\mathbb{F}}
\newcommand{\PP}{\mathbb{P}}
\newcommand{\QQ}{\mathbb{Q}}
\newcommand{\RR}{\mathbb{R}}
\newcommand{\ZZ}{\mathbb{Z}}
\newcommand{\Qbar}{\overline{\QQ}}
\newcommand{\diff}{\mathop{}\!\mathrm{d}}
\newcommand{\rH}{\mathrm{H}}
\newcommand{\dX}{\mathcal{X}}
\newcommand{\Lie}{\operatorname{Lie}}
\def\multiset#1#2{\ensuremath{\left(\kern-.3em\left(\genfrac{}{}{0pt}{}{#1}{#2}\right)\kern-.3em\right)}}
\definecolor{terracotta}{rgb}{0.89, 0.45, 0.36}
\definecolor{turquoise}{rgb}{0.19, 0.84, 0.78}
\newcommand{\xrev}[1]{\todo[color=cyan!70]{X: #1}}
\theoremstyle{plain}
\newtheorem{thm}{Theorem}
\newtheorem{cor}[thm]{Corollary}
\newtheorem{prop}[thm]{Proposition}
\newtheorem{theorem}[thm]{Theorem}
\newtheorem*{theorem*}{Theorem}
\newtheorem{theorema}{Theorem}
\newtheorem{lemma}[thm]{Lemma}
\newtheorem*{props}{Proposition}
\theoremstyle{definition}
\newtheorem{defn}[thm]{Definition}
\theoremstyle{remark}
\newtheorem{rem}[thm]{Remark}
\newtheorem*{rem*}{Remark}
\numberwithin{equation}{section}
\numberwithin{thm}{section}
\numberwithin{equation}{section}
\let\@wraptoccontribs\wraptoccontribs
\renewcommand{\tfrac}[2]{\textstyle{\frac{#1}{#2}}}
\begin{document}
	
	\title[Refined Selmer Equations]{Refined Selmer Equations for the Thrice-Punctured Line in depth two}

	\author[]{Alex J. Best}
	\author[]{L. Alexander Betts}
	\author[]{Theresa Kumpitsch}
	\author[]{Martin L\"udtke}
	\author[]{Angus W. McAndrew}
	\author[]{Lie Qian}
	\author[]{Elie Studnia}
	\author[]{Yujie Xu}
	
	
	\address{Mathematics and Statistics Department, Boston University, Boston, MA 02215, USA}
	\email{\href{alexjbest@gmail.com}{alexjbest@gmail.com} }
	\email{\href{mailto:angusmca@bu.edu}{angusmca@bu.edu}}
	\address{Mathematics Department, Harvard University, Cambridge, MA 02138, USA}
	\email{\href{mailto:abetts@math.harvard.edu}{abetts@math.harvard.edu} }
	\email{\href{mailto:yujiex@math.harvard.edu}{yujiex@math.harvard.edu}}
	\address{Institut für Mathematik, Goethe–Universit\"at Frankfurt, Robert-Mayer-Stra{\ss}e 6–8, 60325}
	\email{\href{mailto:kumpitsch@math.uni-frankfurt.de}{kumpitsch@math.uni-frankfurt.de} }
	\address{Bernoulli Institute, Rijksuniversiteit Groningen, Nijenborgh 9, 9747 AG Groningen, The Netherlands}
	\email{\href{mailto:m.w.ludtke@rug.nl}{m.w.ludtke@rug.nl}}
	\address{Department of Mathematics
		Building 380, Stanford University, Stanford, CA 94305}
	\email{\href{mailto:
			lqian@stanford.edu}{lqian@stanford.edu} }
	\address{Ecole Normale Sup\'erieure, 75005 Paris}
	\email{\href{mailto:
			studnia@imj-prg.fr}{studnia@imj-prg.fr}}
	
	\subjclass[2010]{14G05,11G55,11Y50}

	\begin{abstract}
		In \cite{kim2005motivic}, Kim gave a new proof of Siegel's Theorem that there are only finitely many $S$-integral points on $\PP^1_\ZZ\setminus\{0,1,\infty\}$. One advantage of Kim's method is that it in principle allows one to actually find these points, but the calculations grow vastly more complicated as the size of $S$ increases. In this paper, we implement a refinement of Kim's method to explicitly compute various examples where $S$ has size~$2$ which has been introduced in \cite{BD19}. In so doing, we exhibit new examples of a natural generalization of a conjecture of Kim.
	\end{abstract}

	\maketitle
	
	\setcounter{tocdepth}{1}
	\tableofcontents
	
	\section{Introduction}
	
	Let $S$ be a finite set of primes and let $\mathcal X= \PP^1_{\ZZ_S}\setminus \{0,1,\infty\}$ where $\ZZ_S$ denotes the ring of $S$-integers. By \textit{Siegel's Theorem}, the set $\mathcal{X}(\ZZ_S)$ is finite. One procedure to in principle compute the set $\mathcal{X}(\ZZ_S)$ was introduced by Kim \cite{kim2005motivic},
	who constructed a sequence of subsets, called the \textit{Chabauty--Kim loci}, for each prime $p\notin S$:
	\[\mathcal{X}(\ZZ_p) \supseteq \mathcal{X}(\ZZ_p)_{S,1} \supseteq \mathcal{X}(\ZZ_p)_{S,2} \supseteq \ldots \supseteq \mathcal{X}(\ZZ_S).  \]
	The number~$n$ is called the \emph{depth} of the Chabauty--Kim locus. Kim showed that the set $\mathcal X (\ZZ_p)_{S,n}$ is finite in sufficiently large depth $n\gg 0$, thus re-proving Siegel's Theorem. The main advantage of Kim's approach is that the sets~$\mathcal{X}(\ZZ_p)_{S,n}$ are in principle computable, giving one a theoretical algorithm for computing $\mathcal{X}(\ZZ_S)$. However, the calculations involved have proved difficult in practice, and all currently worked-out examples succeeded in computing some of the $\mathcal{X}(\ZZ_p)_{S,n}$ only when $|S|{}\le 1$ \cite{BDCKW,DCW,corwin-dan-cohen:computational_i}.
	
	In this paper we manage to push the boundaries of these computations to cover also the case~$|S|{}=2$ by using a refinement of Kim's method introduced by the second author and Netan Dogra. This modified method introduces \textit{refined Chabauty--Kim loci} 
	$\mathcal{X}(\ZZ_p)_{S,n}^{\min} \subseteq \mathcal{X}(\ZZ_p)_{S,n}$ which still contain the $S$-integral points.
	We compute these sets in the case that $n = 2$ and $|S|{}  \le 2$:
	
	\begin{theorema}[Proposition  \ref{prop:refinedCKset_card=1}]\label{thm:main_s=1}
		Let $S=\{2\}$ and let $p$ be an odd prime. Then $\mathcal X(\ZZ_p)^{\min}_{S,2}$ is equal to the set of nontrivial $(p-1)$-st roots of unity $\zeta \in \ZZ_p$ for which $\Li_2(\zeta) = 0$, along with all the images of this set under the natural action of~$S_3$ on $\mathcal X$. Here, $\Li_2$ is the $p$-adic dilogarithm.
	\end{theorema}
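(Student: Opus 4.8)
The plan is to compute the refined Selmer scheme $\Sel^{\min}_{S,2}$ by hand, push it forward under the localisation map $\loc_p$ into $\mathrm{H}^1_f(G_p,U_2)$, and then pull the resulting subvariety back along the unipotent (Coleman) Albanese map $\kappa_p\colon\mathcal X(\ZZ_p)\to\mathrm{H}^1_f(G_p,U_2)$. Here $U_2$ denotes the depth-$2$ quotient of the $\QQ_p$-pro-unipotent étale fundamental group of $\mathcal X$; it sits in a central extension $1\to\QQ_p(2)\to U_2\to\QQ_p(1)^{2}\to 1$, where the two copies of $\QQ_p(1)$ correspond to the loops $e_0,e_1$ around $0$ and $1$ and $\QQ_p(2)$ is spanned by $[e_0,e_1]$. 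I will also use the weight grading (the $\Gm$-action) on $U_2$, under which $e_0,e_1$ have weight $-1$ and $[e_0,e_1]$ has weight $-2$.

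First I would assemble the Galois-cohomological input. For $\QQ_p(1)$ one has that $\mathrm{H}^1_f(G_{\QQ,\{\ell,p\}},\QQ_p(1))$ (classes unramified outside $\{\ell,p\}$ and crystalline at $p$) is the line spanned by the Kummer class $[\ell]$, and $\log_p\ell\neq 0$ since $\ell$ is a rational prime $\neq p$. For $\QQ_p(2)$ the decisive facts are Soulé's vanishing $\mathrm{H}^1_f(G_{\QQ},\QQ_p(2))=0$ (because $2$ is even) together with the purely local vanishing $\mathrm{H}^1(G_\ell,\QQ_p(2))=\mathrm{H}^2(G_\ell,\QQ_p(2))=0$ for $\ell\neq p$. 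The local vanishing shows $\mathrm{H}^1(G_\ell,U_2)\xrightarrow{\sim}\mathrm{H}^1(G_\ell,U_1)$, so the refined local condition at $\ell$ only constrains the depth-$1$ part of a cocycle; and the global vanishing shows that whenever a depth-$1$ Selmer class lifts to $U_2$ (so that the cup-product obstruction in $\mathrm{H}^2$ is killed), the lift is \emph{unique}.

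Next I would run through the reduction-type decomposition of $\Sel^{\min}_{S,2}$ of \cite{BD19}. Take the reduction type in which the point specialises to $1$ at $\ell$: then $1-z$ is a non-unit at $\ell$ while $z$ is a unit, so the refined condition forces the $e_0$-component $c_0$ (the Kummer class of $z$) to be unramified at $\ell$, hence, being crystalline at $p$ and unramified elsewhere, $c_0=0$; the $e_1$-component $c_1$ then ranges over the line $\langle[\ell]\rangle$. The obstruction $c_0\cup c_1$ vanishes trivially, and by the cohomology computation the torsor of depth-$2$ lifts is a torsor under $0$, so the unique lift is the one valued in the abelian subgroup $\langle e_1\rangle\subset U_2$, whose $[e_0,e_1]$-coordinate is zero. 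Thus $\loc_p$ carries this component isomorphically onto the coordinate line $\{x_0=x_{01}=0\}$ in $\mathrm{H}^1_f(G_p,U_2)\cong\AA^3$ (with $x_0,x_1$ the depth-$1$ coordinates and $x_{01}$ the depth-$2$ coordinate), and the corresponding part of $\mathcal X(\ZZ_p)^{\min}_{S,2}$ is $\{z:\ \log_p z=0,\ D_2(z)=0\}$, where $D_2$ is the depth-$2$ Coleman-function coordinate of $\kappa_p$. Since $D_2$ differs from the $p$-adic dilogarithm only by a multiple of $\log_p(z)\log_p(1-z)$ (the standard identification of the iterated Coleman integral $\int\frac{dt}{t}\frac{dt}{1-t}$, via the Bloch--Kato comparison), the hypothesis $\log_p z=0$ kills the correction term and this locus becomes exactly $\{z:\ \log_p z=0,\ \Li_2(z)=0\}$, i.e.\ the nontrivial $(p-1)$-st roots of unity $\zeta$ with $\Li_2(\zeta)=0$ (using that $\log_p z=0$ exactly cuts out $\mu_{p-1}$ inside $\ZZ_p^\times$, and $z\in\mathcal X(\ZZ_p)$ forces $z\neq 1$).

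Finally I would deal with the remaining reduction types and the good-reduction locus. The $S_3$-action on $\mathcal X$ permutes $\{0,1,\infty\}$ and hence permutes the reduction-type components of $\Sel^{\min}_{S,2}$, so the specialisation-to-$0$ and specialisation-to-$\infty$ components are the $S_3$-translates of the one treated above, while the good-reduction component (where $c_0,c_1$ are both $0$ and again the lift is trivial, so $\loc_p$ maps it to $\{0\}$) contributes a subset of the locus already found. Invoking the reflection and inversion functional equations of the $p$-adic dilogarithm to match up the $S_3$-translated conditions, one obtains $\mathcal X(\ZZ_p)^{\min}_{S,2}=S_3\cdot\{z:\ \log_p z=0,\ \Li_2(z)=0\}$, which is the asserted description ``up to the natural $S_3$-action''. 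The main obstacle, I expect, is the bookkeeping in this last step together with the Coleman-integration comparison: pinning down the exact normalisation of the $p$-adic dilogarithm and the precise shape of the depth-$2$ Albanese coordinate so that the Selmer-side computation matches the polylogarithm side on the nose, and then checking that the functional equations of $\Li_2$ — including the values of their constant terms — are exactly what is needed for the three reduction-type loci to assemble into a single $S_3$-orbit. The cohomological vanishing statements for $\QQ_p(2)$, though standard, are also doing essential work, as they are precisely what makes the refined Selmer scheme (unlike Kim's unrefined one in this range) low-dimensional enough to cut out a finite set.
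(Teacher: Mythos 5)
Your proposal is correct and follows essentially the same route as the paper: reduce to a single refinement condition via the $S_3$-action, observe that on that component the depth-one coordinate $x_\ell$ vanishes so the image under $\loc_p$ is the line $\{u=w=0\}$ in $U_2^{\dR}\cong\AA^3$, and pull back along $j_{\dR}(z)=(\log(z),\log(1-z),-\Li_2(z))$ to get $\log(z)=\Li_2(z)=0$. The only cosmetic difference is that you derive the vanishing of the depth-two local coordinate directly from the uniqueness of the cocycle lift (Soul\'e vanishing) and the fact that the lift is valued in the abelian subgroup $\langle e_1\rangle$, whereas the paper gets the same conclusion from the bilinearity of $h_3$; your remark that any $\log(z)\log(1-z)$ normalisation ambiguity dies on the locus $\log(z)=0$ is a harmless extra precaution.
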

	
	\begin{theorema}[Theorem \ref{refined Chabauty-Kim equations}]\label{thm:main_s=2}
		Let $S = \{2, q\}$, and let $p \notin S$ be a prime. 
		Then~$\mathcal X(\ZZ_p)_{S,2}^{\min}$ is equal to the set of points $z\in\mathcal X(\ZZ_p)$ satisfying the equation
		\[ a_{2,q} \Li_2(z) = a_{q,2} \Li_2(1-z) \] 
		along with all of the images of this set under the natural action of~$S_3$ on $\mathcal X$. Here, $a_{2,q}$ and $a_{q,2}$ are certain computable $p$-adic numbers which are not both zero. (See \S\ref{subsec:locdepth2} for the definition of the constants $a_{\ell,q}$ and a description of an algorithm to compute them; this algorithm has been implemented in SageMath~\cite{dcw_coefficients}.)
	\end{theorema}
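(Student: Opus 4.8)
Below is the approach I would take.

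The plan is to unwind the definition of $\dX(\ZZ_p)_{S,2}^{\min}$ and reduce it to an explicit computation of the localisation map at $p$ on a two-dimensional Selmer scheme. Write $U=U_2$ for the depth-$2$ quotient of the $\QQ_p$-pro-unipotent étale fundamental group of $\dX_{\Qbar}$, so that $1\to\QQ_p(2)\to U\to U^{\ab}\to1$ with $U^{\ab}\cong\QQ_p(1)^{\oplus2}$. The local unipotent Kummer map embeds $\dX(\ZZ_p)$ into $\rH^1_f(G_{\QQ_p},U)\cong U^{\dR}/F^0$, a three-dimensional $\QQ_p$-variety on which there are coordinates whose pullbacks to $\dX(\ZZ_p)$ are the $p$-adic Coleman functions $\log_p z$, $\log_p(1-z)$ and $\Li_2(z)$. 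By construction $\dX(\ZZ_p)_{S,2}^{\min}$ is the preimage of $\loc_p(\Sel^{\min}_{S,2})$ under this embedding, where $\Sel^{\min}_{S,2}$ is the minimal refined Selmer scheme and $\loc_p$ denotes restriction to the decomposition group at $p$. So the task is to compute the Zariski closure of $\loc_p(\Sel^{\min}_{S,2})$ and pull it back.

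Next I would describe $\Sel^{\min}_{S,2}$: it decomposes as a union of ``cuspidal'' pieces $\Sel^{(\mathfrak c_\ell,\mathfrak c_q)}$, indexed by a choice of cusp $\mathfrak c_\ell\in\{0,1,\infty\}$ at $\ell$ and $\mathfrak c_q$ at $q$, the condition at $\ell$ (resp.\ $q$) being that the local class looks like that of a point reducing to $\mathfrak c_\ell$ (resp.\ $\mathfrak c_q$). A short computation with $S$-unit equations shows that an $S$-integral point never reduces to the \emph{same} cusp at both primes, so the minimal such scheme is the union of the six off-diagonal pieces ($\mathfrak c_\ell\neq\mathfrak c_q$); these form a single $S_3$-orbit, so by equivariance it suffices to analyse one, say $(\mathfrak c_\ell,\mathfrak c_q)=(0,1)$. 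On abelianisations, the crystalline condition at $p$ together with the two cuspidal conditions cut the Kummer classes of $z$ and $1-z$ out of $\rH^1(G_{\QQ,T},\QQ_p(1))^{\oplus2}\cong(\ZZ_T^\times\otimes\QQ_p)^{\oplus2}$ (with $T=S\cup\{p\}$): the class of $z$ is forced to be a multiple of $[\ell]$ and that of $1-z$ a multiple of $[q]$, so the abelianised piece is an affine plane. The depth-$2$ layer adds nothing to the dimension, since the obstruction to lifting lies in $\rH^2(G_{\QQ,T},\QQ_p(2))=0$ and the lifts form a torsor under $\rH^1_f(G_\QQ,\QQ_p(2))=0$ (Soulé), so the lift is unique. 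Hence $\Sel^{(0,1)}_{S,2}$ is two-dimensional, and as the two abelian coordinates of $\loc_p$ already separate points, $\loc_p$ is injective on it.

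It remains to compute $\loc_p$. Parametrise the abelianised piece by $(a_\ell,b_q)$, the coefficients of $[\ell]$ (for $z$) and $[q]$ (for $1-z$); then the abelian coordinates of $\loc_p$ are the Bloch--Kato logarithms $\alpha=a_\ell\log_p\ell$ and $\beta=b_q\log_p q$, which are free. The depth-$2$ coordinate $\gamma$ of $\loc_p$ of the unique lift is, by a cocycle computation, \emph{bilinear} in $(a_\ell,b_q)$: the depth-$2$ part of the cocycle is a global primitive of the cup product $a_\ell b_q\,[\ell]\cup[q]$, and the linear-in-$(a_\ell,b_q)$ terms one might expect vanish because the relevant correction in the nonabelian Bloch--Kato exponential is bilinear in the two abelian components. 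Comparing the global primitive with its crystalline counterpart at $p$ expresses the bilinear coefficient in terms of $p$-adic polylogarithm values of $\ell$, $q$, $1-\ell$ and $1-q$; these are the constants $a_{\ell,q}$ and $a_{q,\ell}$. Eliminating $(a_\ell,b_q)$ among $\alpha,\beta,\gamma$ then yields a single relation $\gamma\propto\alpha\beta$; pulling it back along the unipotent Kummer map, where $(\alpha,\beta,\gamma)$ becomes $(\log_p z,\log_p(1-z),\Li_2(z))$, and applying the functional equation of the $p$-adic dilogarithm relating $\Li_2(z)$, $\Li_2(1-z)$ and $\log_p z\cdot\log_p(1-z)$, rewrites it as $a_{\ell,q}\Li_2(z)=a_{q,\ell}\Li_2(1-z)$. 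Finally, the six pieces being a single $S_3$-orbit and everything $S_3$-equivariant, $\dX(\ZZ_p)_{S,2}^{\min}$ is the union of the $S_3$-translates of this locus --- that is, it is cut out by this equation up to the $S_3$-action --- and it contains $\dX(\ZZ_S)$ by the commutative diagram defining the method, so no integral points are lost.

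The step I expect to be the main obstacle is the last one: computing $\loc_p$ on the depth-$2$ layer precisely enough to pin down the constants $a_{\ell,q}$, as opposed to merely exhibiting an equation of this shape. This means tracking the full nonabelian cocycle attached to a refined Selmer class --- its abelian components, the obstruction cup product, and the chosen depth-$2$ primitive --- and matching the étale description with the de Rham/crystalline one, where the Bloch--Kato exponential and the $p$-adic dilogarithm enter; this is also the source of the algorithm of \S\ref{subsec:locdepth2} computing the $a_{\ell,q}$.
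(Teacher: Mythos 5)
Your central computation — localizing on a single off‑diagonal refinement piece, using the bilinearity of the depth‑$2$ component $h_3$, eliminating the Selmer parameters, and rewriting via the dilogarithm functional equation and the twisted antisymmetry $a_{\ell,q}+a_{q,\ell}=\log\ell\log q$ — is essentially the argument the paper gives and reaches the correct equation. However, there is a genuine gap at the start, and it matters for proving the theorem \emph{as stated}.

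The paper's $\Sel_{S,2}^{\min}$ is defined by purely local conditions at each prime of $S$ (following Betts--Dogra), so it is a union of $3^{|S|}=9$ linear pieces $\Sel_{S,2}^{\Sigma}$, and $\dX(\ZZ_p)_{S,2}^{\min}$ is by definition the union of the nine corresponding loci. Your ``six off‑diagonal cuspidal pieces'' is not a local definition — whether $\mathfrak c_\ell\neq\mathfrak c_q$ compares data across two different primes — so the scheme you describe is a proper sub‑union of the actual $\Sel_{S,2}^{\min}$. The observation that no $S$‑integral point genuinely reduces to the same cusp at both primes shows that $j_S(\dX(\ZZ_S))$ lands in your six‑piece union, but it does not show that the three diagonal pieces contribute nothing to $\dX(\ZZ_p)_{S,2}^{\min}$ (the set you must compute is defined scheme‑theoretically, not via which pieces meet the image of $j_S$). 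What you actually need, and what the paper proves, is the containment $\dX(\ZZ_p)_{S,2}^{(|,|)}\subseteq\dX(\ZZ_p)_{S,2}^{(|,-)}$: the locus $\{\log z=\Li_2(z)=0\}$ cut out by the diagonal piece trivially satisfies the $(|,-)$ equation, so the diagonal orbit is absorbed. Without this (easy but necessary) step, your proof would only establish that $\dX(\ZZ_S)$ satisfies the displayed equation, not that it cuts out the full set $\dX(\ZZ_p)_{S,2}^{\min}$.

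A smaller point: the constants $a_{\ell,q}$ are \emph{not} simply $p$‑adic dilogarithms of $\ell,q,1-\ell,1-q$ (indeed $1-\ell$ or $1-q$ need not be a $p$‑adic unit). In general they are obtained by decomposing $[\ell]\wedge[q]$ into a $\QQ$‑linear combination of Steinberg elements $[t]\wedge[1-t]$ with $t,1-t\in\ZZ_{(p)}^\times$, and then taking a linear combination of $\Li_2(t_i)$ and $\Li_2(1-t_i)$; the simple dilogarithm formula only occurs in special cases such as $\{2,q\}$ with $q$ a Fermat or Mersenne prime, where the required Steinberg decomposition has length one.
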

	
	
	
	These results should be understood in the context of Kim's Conjecture\footnote{Kim's method applies not just to the thrice-punctured line, but more generally to any $S$-integral model of a hyperbolic curve, and Kim's Conjecture is formulated for all such~$\mathcal X$.} \cite[Conjecture~3.1 \& \S8.1]{BDCKW}, which asserts that $\mathcal{X}(\ZZ_p)_{S,n}=\mathcal{X}(\ZZ_S)$ for~$n\gg0$. This has been verified in several small cases:
	\begin{itemize}
		\item when $S=\emptyset$, $n=2$, and $p < 10^5$ \cite[\S6]{BDCKW};
		\item when $S=\{2\}$, $n=4$, and $3\leq p\leq29$ \cite[\S8]{DCW16};
		\item when $S=\{3\}$, $n=4$, and $p\in\{5,7\}$ \cite[Theorem~1.3]{corwin-dan-cohen:computational_i}.
	\end{itemize}

	It is natural in this context to formulate a refined version of Kim's Conjecture: that
	\[
	\mathcal{X}(\ZZ_p)_{S,n}^{\min}=\mathcal{X}(\ZZ_S)
	\]
	for~$n\gg0$. This refined conjecture holds whenever Kim's original conjecture holds, and Theorems~\ref{thm:main_s=1} and~\ref{thm:main_s=2} provide new examples where we can verify our refined conjecture for the thrice-punctured line, namely:
	\begin{itemize}
		\item whenever $2 \not\in S$, for any~$n$ and~$p$ (see below);
		\item when $S = \{2\}$, $n=1$ and $p = 3$;
		\item when~$S=\{2\}$, $n=2$ and $3\leq p\leq 10^5$; and
		\item when~$S=\{2,q\}$, $n=2$ and~$p=3$, where~$q>3$ is either a Fermat or Mersenne prime, or is one of the primes
		\begin{align*}
			q =\; &19,\; 37,\; 53,\; 107,\; 109,\; 163,\; 181,\; 199,\; 269,\; 271,\; 379,\; \\
			&431,\; 433,\; 487,\; 523,\; 541,\; 577,\; 593,\; 631,\; 701,\; 739,\; \\
			&757,\; 809,\; 811,\; 829,\; 863,\; 883,\; 919,\; 937,\; 971,\; 991,\; \ldots
		\end{align*}
	\end{itemize}
	Notably, even in the case $S=\{2\}$, our refined conjecture holds in lower depth~$n$ than Kim's original conjecture, and use of refined Chabauty--Kim makes the case~$|S|{}=2$ also accessible in depth 2.
	
	
	Let us say a little more about the fourth of these points. Using Theorem~\ref{thm:main_s=2} and some Newton polygon analysis, we prove the following.
	
	
	\begin{props}[Proposition  \ref{newton polygon analysis result}] 
		Let $S = \{2, q\}$ for~$q>3$ prime, and let $p = 3$. Then the refined Chabauty--Kim set $\mathcal X(\ZZ_3)_{S,2}^{\min}$ contains $\{2,-1,\frac12\}$ and at most one more $S_3$-orbit of points. The second orbit is present if and only if 
		\begin{equation}\label{eq:valuation_criterion}\tag{$\dag$}
			\min\{ v_3(a_{2,q}), v_3(a_{q,2}) \} = 1 + v_3(\log(q)) \,.
		\end{equation}
	\end{props}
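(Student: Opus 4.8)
The plan is to extract everything from Theorem~\ref{thm:main_s=2}, which identifies $\mathcal X(\ZZ_3)^{\min}_{S,2}$ with the $S_3$-saturation of the zero locus on $\mathcal X(\ZZ_3)$ of the Coleman function $F(z):=a_{\ell,q}\Li_2(z)-a_{q,\ell}\Li_2(1-z)$. The first point is that for $p=3$ the set $\mathcal X(\ZZ_3)$ is a \emph{single} residue disc $D=\{z\in\ZZ_3:z\equiv 2\pmod 3\}$, which is preserved by the $S_3$-action and which contains the orbit $\{2,-1,\tfrac12\}$, the three fixed points of the transpositions in $S_3$. Since $\QQ_3$ has no primitive sixth root of unity, the order-three elements of $S_3$ act freely on $D$, so $\{2,-1,\tfrac12\}$ is the only $S_3$-orbit of size $<6$ in $D$, and every other orbit has size exactly $6$. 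Using the standard vanishing $\Li_2(-1)=0$ for odd $p$ (which, together with $\Li_2(1)=0$, follows from the distribution relation $\Li_2(z)+\Li_2(-z)=\tfrac12\Li_2(z^2)$) and the reflection identity $\Li_2(z)+\Li_2(1-z)=-\log z\log(1-z)$, which also gives $\Li_2(2)=0$, one checks $F(2)=F(-1)=0$, so $\{2,-1,\tfrac12\}\subseteq\mathcal X(\ZZ_3)^{\min}_{S,2}$ always. It remains to bound the further orbits by one and to identify exactly when one occurs.

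Next I would set up the coordinate $z=2+3t$ with $t\in\ZZ_3$, so that $2,-1,\tfrac12$ correspond to $t=0,-1,-\tfrac12$. Passing to the ``clean'' dilogarithm $D_2(z):=\Li_2(z)+\tfrac12\log z\log(1-z)$, which transforms by the sign character of $S_3$ (in particular $D_2$ vanishes on $\{2,-1,\tfrac12\}$), one rewrites $F(z)=A\,D_2(z)+C\log(z)\log(1-z)$ with $A:=a_{\ell,q}+a_{q,\ell}$ and $C:=\tfrac12(a_{q,\ell}-a_{\ell,q})$. On $D$ the function $\log(1-z)$ has a simple zero at $z=2$ and $\log z$ a simple zero at $z=-1$; pulling out these common zeros of $F$ yields $F(2+3t)=t(t+1)\,\mathcal{H}(t)$ with $\mathcal{H}$ convergent on $\ZZ_3$, whose zeros in $\ZZ_3$ other than $0,-1,-\tfrac12$ are precisely the ``new'' points of $\mathcal X(\ZZ_3)^{\min}_{S,2}$. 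A short computation with the Taylor expansions of $\Li_2$ at $2$ and $-1$ gives the low-order coefficients of $\mathcal{H}$ explicitly — for instance $\mathcal{H}(0)=3a_{q,\ell}\log 2$ and $\mathcal{H}(-1)=-3a_{\ell,q}\log 2$ — the general coefficient being a $\ZZ[\tfrac12]$-linear combination of $1$ and $\log 2$ with coefficients linear in $a_{\ell,q},a_{q,\ell}$.

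The crux is a Newton polygon analysis of $\mathcal{H}$. Here I would feed in the description of $a_{\ell,q},a_{q,\ell}$ from \S\ref{subsec:locdepth2}: these are assembled from $\log\ell$, $\log q$ and the local depth-two data at $\ell$ and $q$, and in particular $v_3(a_{\ell,q}),v_3(a_{q,\ell})\ge v_3(\log\ell)+v_3(\log q)$, with \eqref{eq:valuation_criterion} being the statement that equality holds for at least one of them. Combining this with $v_3(\log 2)=1$ and the explicit low-order coefficients of $\mathcal{H}$, one shows: if $\min\{v_3(a_{\ell,q}),v_3(a_{q,\ell})\}>v_3(\log\ell)+v_3(\log q)$ then the Newton polygon of $\mathcal{H}$ has only strictly positive slopes, so $\mathcal{H}$ has no zero in $\ZZ_3$ and there is no extra orbit; while if \eqref{eq:valuation_criterion} holds the non-positively-sloped part of the polygon has positive horizontal length, which one checks forces a new zero of $\mathcal{H}$ in $\ZZ_3$. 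Finally, when a new point is present one must verify that its $S_3$-orbit is the \emph{only} new orbit: I would do this by bounding the horizontal length of the relevant part of the Newton polygon (equivalently, the total number of zeros of $F$ on $D$, or of the $S_3$-symmetrisation $\prod_{\gamma\in S_3}F(\gamma z)$), and combining this bound with the freeness of the $S_3$-action on $D\setminus\{2,-1,\tfrac12\}$ and with the transposition identity $F(z)+F(1-z)=-(a_{\ell,q}-a_{q,\ell})\log z\log(1-z)$, which constrains which orbit the new zeros can lie in.

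The main obstacle is this last Newton polygon step. One needs the valuations of the coefficients of $\mathcal{H}$ sharply — not just lower bounds — in order to see the critical slope change sign exactly at the threshold \eqref{eq:valuation_criterion}, and this in turn requires precise information about $v_3(a_{\ell,q})$ and $v_3(a_{q,\ell})$ coming from their definition, not merely the generic estimate; in addition one needs the extra combinatorial input to exclude two or more new orbits and to pin down $\ZZ_3$-rationality of the new zero. The other ingredients — the reduction via Theorem~\ref{thm:main_s=2}, the single-residue-disc structure, the $p$-adic dilogarithm functional equations, and $\Li_2(2)=\Li_2(-1)=0$ — are routine.
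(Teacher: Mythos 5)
Your overall architecture matches the paper's: reduce via Theorem~\ref{refined Chabauty-Kim equations} to the single equation $a_{\ell,q}\Li_2(z)=a_{q,\ell}\Li_2(1-z)$ on the unique residue disc $2+3\ZZ_3$, note that $z=2$ and $z=-1$ are always roots, and detect any further root by a Newton polygon whose critical coefficient encodes the criterion $(\dag)$. Your low-order coefficients are correct (e.g.\ $\mathcal H(0)=3a_{q,\ell}\log 2$, $\mathcal H(-1)=-3a_{\ell,q}\log 2$), and the count of "at most one further orbit'' via freeness of the $S_3$-action on the disc outside $\{2,-1,\frac12\}$ is sound.

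There is, however, a genuine error precisely at the step you flag as the crux. You assert $v_3(a_{\ell,q}),v_3(a_{q,\ell})\ge v_3(\log\ell)+v_3(\log q)$ and base your case analysis on the dichotomy "$\min>$'' versus "$\min=$''. This inequality is false and points the wrong way: the only relation available is the twisted antisymmetry $a_{\ell,q}+a_{q,\ell}=\log(\ell)\log(q)$ (Proposition~\ref{twisted antisymmetry relations}), which by the ultrametric inequality gives
\[
\min\{v_3(a_{\ell,q}),v_3(a_{q,\ell})\}\ \le\ v_3(\log\ell)+v_3(\log q),
\]
so the true dichotomy is "$\min<$'' (no extra orbit) versus "$\min=$'' (extra orbit); the case you analyse never occurs, and indeed for the primes $q=19,37,53,\dots$ where the refined conjecture is verified one has strict inequality $<$. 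Relatedly, you suggest that sharp valuations of the coefficients of $\mathcal H$ must be dug out of the algorithmic definition of $a_{\ell,q}$; in fact nothing beyond the twisted antisymmetry is needed. With $\nu=2+\min\{v_3(a_{\ell,q}),v_3(a_{q,\ell})\}$, the Newton polygon of $f(2-3t)$ passes through $(1,\nu)$ and $(2,\nu)$ (the roots $z=2,-1$), lies strictly above height $\nu$ for $k\ge4$, and acquires one further slope-$0$ segment exactly when the cubic coefficient
\[
c_3=\tfrac16\log(\ell)\log(q)+\tfrac13\bigl(\log 2-\tfrac98\bigr)a_{q,\ell}
\]
(obtained by substituting the antisymmetry relation) has valuation $\min-1$; since $v_3\bigl(\tfrac13(\log2-\tfrac98)\bigr)=0$ the second summand has valuation exactly $\min$ while the first has valuation $v_3(\log\ell)+v_3(\log q)-1\ge\min-1$, so this happens iff $(\dag)$ holds. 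Your decomposition $F=A\,D_2+C\log z\log(1-z)$ with $A=a_{\ell,q}+a_{q,\ell}$ would work equally well, but only after replacing your false valuation bound by the correct one above.
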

	
	
	We check by elementary means in Section \ref{sec: classification of solutions} that when $S = \{2,q\}$ and~$q>3$, the set $\mathcal{X}(\ZZ_S)$ consists of $\{2,-1,\frac12\}$ and at most one more $S_3$-orbit of points, which is present if and only if~$q$ is a Fermat or Mersenne prime. (If~$q$ is Fermat, then~$q\in\mathcal{X}(\ZZ_S)$; if~$q$ is Mersenne, then~$-q\in\mathcal{X}(\ZZ_S)$.) So in particular, we see that our refined version of Kim's conjecture holds for~$S=\{2,q\}$, $n=2$ and~$p=3$ whenever $q>3$ is either Fermat or Mersenne, and also whenever condition~\eqref{eq:valuation_criterion} fails. Using the code in \cite{dcw_coefficients}, the values of~$q < 1000$ for which condition~\eqref{eq:valuation_criterion} fails are exactly the 31 values of~$q$ listed earlier.
	
	If $S = \{2,3\}$ we cannot choose $p=3$ since $p$ must be not contained in~$S$; this case is thus not covered by the Proposition above. If $S = \{2,3\}$, the smallest possible choice for $p$ is $p=5$. We treat this case in Section~\ref{case3} and show that $\mathcal{X}(\ZZ_5)_{\{2,3\},2}^{\min}$ is strictly larger that $\mathcal{X}(\ZZ[1/6])$, i.e.\ the refined Kim's conjecture does not hold when $S = \{2,3\}$, $n=2$ and $p=5$.
	
	\begin{rem*}[{Remark~\ref{rem:empty_locus}}]
		If~$2\notin S$, then $\mathcal{X}(\ZZ_p)_{S,n}^{\min}$ is automatically empty, and hence the refined version of Kim's Conjecture for the thrice-punctured line always holds in this case, for any $n$ and $p$. Together with Theorems~\ref{thm:main_s=1} and~\ref{thm:main_s=2}, this gives an explicit description of $\mathcal{X}(\ZZ_p)_{S,2}^{\min}$ for any~$S$ of size~$\leq2$.
		
		
		In fact, Kim's Conjecture as originally formulated in \cite{BDCKW} also holds automatically for the thrice-punctured line whenever $S\not\ni2$. This observation subsumes the cases of Kim's Conjecture verified in \cite[\S6]{BDCKW} and \cite[Theorem~1.3]{corwin-dan-cohen:computational_i} above. What is proved in these papers is, in effect, a slightly stronger version of Kim's Conjecture, using the Selmer schemes as defined in \cite[p.~95]{kim-unipotent-albanese} in place of those defined in \cite[Definition~2.7 \& \S8.1]{BDCKW}.
	\end{rem*}
	
	Let us now say a few words about the refined Chabauty--Kim method. The usual Chabauty--Kim method, which in fact applies to a general hyperbolic curve $\mathcal X$, revolves around the study of two objects: the \emph{global Selmer scheme} $\Sel_{S,n}$ and the \emph{local Selmer scheme} $H^1_f(G_p,U^\et_n)$, both defined in terms of the $\QQ_p$-pro-unipotent \'etale fundamental group truncated in depth\footnote{that is, the quotient of this group by the $(n+1)$-st step in its lower central series}~$n$. These are both affine schemes of finite type over $\QQ_p$, and when the inequality
	\[
	\dim\Sel_{S,n} < \dim H^1_f(G_p,U^\et_n)
	\]
	holds, then the Chabauty--Kim locus $\mathcal X(\ZZ_p)_{S,n}$ is finite. Moreover, given a sufficiently explicit description of the local and global Selmer schemes, one can write down defining equations for the Chabauty--Kim locus $\mathcal X(\ZZ_p)_{S,n}$, in the form of Coleman analytic functions on $\mathcal X(\ZZ_p)$ which vanish on $\mathcal X(\ZZ_p)_{S,n}$.
	
	
	This theory was studied in detail in the case of $\mathcal X=\PP^1_{\ZZ_S}\setminus\{0,1,\infty\}$ and $n=2$ in work of Dan-Cohen and Wewers \cite{DCW}. There, they showed that $\dim H^1_f(G_p,U^\et_2)=3$, while $\dim\Sel_{S,2}=2|S|$. So the usual Chabauty--Kim method applies for this $\mathcal X$ whenever $|S|{}\leq1$, and in the case $|S|{}=1$, Dan-Cohen and Wewers found that the Chabauty--Kim locus $\mathcal X(\ZZ_p)_{S,2}$ is cut out by the equation
	\[
	2\Li_2(z)=\log(z)\log(1-z)
	\]
	(independent of $S$) \cite[\S12]{DCW}.
	
	The refined Chabauty--Kim method of \cite{BD19} replaces the global Selmer scheme by a \emph{refined global Selmer scheme} $\Sel_{S,n}^{\min}$ which is a closed subscheme of $\Sel_{S,n}$, and when the inequality
	\[
	\dim\Sel_{S,n}^{\min}<\dim H^1_f(G_p,U^\et_n)
	\]
	holds, then the refined Chabauty--Kim locus $\mathcal X(\ZZ_p)_{S,n}^{\min}$ is finite. In the particular case that $\mathcal X=\PP^1_{\ZZ_S}\setminus\{0,1,\infty\}$ and $n=2$, we have $\dim\Sel_{S,2}^{\min}={}|S|$, so the refined Chabauty--Kim method applies now whenever $|S|{}\leq2$. Moreover, using the explicit descriptions from \cite[\S12]{DCW}, 
	we can obtain explicit descriptions of the refined Chabauty--Kim loci, as in our Theorems~\ref{thm:main_s=1} and~\ref{thm:main_s=2}. Notably, refined Chabauty--Kim allows us to deal with the case $|S|{}=2$ already in depth $n=2$. And even in the case $|S|{}=1$, refined Chabauty--Kim provides more stringent constraints than usual Chabauty--Kim: the refined locus $\mathcal X(\ZZ_p)_{S,2}^{\min}$ for $S = \{2\}$ is
	the union of the $S_3$-translates of the set cut out by the two equations
	\[
	\log(z)=\Li_2(z)=0
	\]
	(the equation $\log(z)=0$ just says that~$z$ is a $(p-1)$st root of unity). The fact that we get two defining equations rather than one is significant in the context of Kim's conjecture: a generic pair of Coleman functions on a curve have no common zeroes, so heuristically one would expect any common zero of $\log(z)$ and $\Li_2(z)$ to be there for a reason. More specifically, it seems reasonable to conjecture that the only solution to $\log(z)=\Li_2(z)=0$ in $\mathcal X(\ZZ_p)$ is $z=-1$: this would imply the refined version of Kim's Conjecture for $S=\{2\}$, $n=2$ and the same prime $p$. For $p < 10^5$ congruent to $1$ mod $3$, \cite[\S6]{BDCKW} verified numerically the nonvanishing of $\Li_2(\zeta)$ for $\zeta$ a primitive 6-th root of unity; we extended this to all odd primes $p < 10^5$ and \emph{all} nontrivial $(p-1)$-st roots of unity~$\zeta$.
	
	We remark that in higher depth~$n$, the second, third and fourth authors have a proof of the refined Kim's Conjecture for $\mathcal{X}=\PP^1\setminus\{0,1,\infty\}$, $S=\{2\}$ and any odd~$p$, which will appear in forthcoming work. In fact truncating in depth $n \geq p-3$ is sufficient for arbitrary~$p\geq5$.
	
	\subsection*{Acknowledgements}
	This project was started during the 2020 Arizona Winter School as part of a project group guided by Minhyong Kim and the second author. The authors would like to thank Minhyong for his support and guidance throughout the different stages of this project, as well as the organizers of the Arizona Winter School for making this collaboration possible in the first place. We would also like to thank the referees for their very careful reading of this paper and their many helpful suggestions, which led to substantial improvements to the paper. The alternative approach to computing the coefficients $a_{\ell,q}$ in Remark~\ref{rem:differences} is due to them.
	
	\section{The $S$-Unit Equation and Classification of Solutions}
	\label{sec: classification of solutions}
	
	Before we begin the paper proper, let us recall a few elementary facts about $S$-integral points on the thrice-punctured line
	\[
	\mathcal X = \PP^1_{\ZZ_S}\setminus\{0,1,\infty\} = \Spec\left(\ZZ_S[u^{\pm1},v^{\pm1}]/(1-u-v)\right) \,.
	\]
	$S$-integral points on~$\mathcal X$ are the same thing as solutions to the \emph{$S$-unit equation}, i.e.\ they are elements $u\in\ZZ_S^\times$ such that $1-u\in\ZZ_S^\times$ also. Equivalently, $S$-integral points on~$\mathcal X$ correspond to solutions $(a,b,c)$ of the equation
	\begin{equation}
		\label{eq:unit}
		a + b = c 
	\end{equation}
	with $a,b,c \in \ZZ$ coprime and divisible only by primes in $S$ (up to identifying $(a,b,c)\sim(-a,-b,-c)$).
	
	The solutions to the $S$-unit equation can be determined when $|S|{}\leq2$ as follows.

	\begin{prop}
		\leavevmode
		\begin{itemize}
			\item If $S$ is a finite set of odd primes, then $\mathcal X(\ZZ_S)$ is empty.
			\item $\mathcal X(\ZZ_{\{2\}})$ consists of the $S_3$-orbit of the point $2$.
			\item If $q > 2$ is a prime, then $\mathcal X(\ZZ_{\{2,q\}})$ is exactly the union of $\mathcal X(\ZZ_{\{2\}})$ and the $S_3$-orbits of the following elements:
			\begin{itemize}
				\item[$\ast$] $q$ if $q$ is a Fermat prime;
				\item[$\ast$] $-q$ if $q$ is a Mersenne prime; and
				\item[$\ast$] $9$ if $q=3$. 
			\end{itemize}
			
		\end{itemize}
		In particular, for $S=\{2,3\}$, $\mathcal X(\ZZ_S)$ is equal to
		\begin{center} $\left\{2, \frac{1}{2}, -1\right\}\cup\left\{ 3,\frac{1}{3},\frac{2}{3}, \frac{3}{2}, -\frac{1}{2}, -2 \right\}\cup\left\{ 4, \frac{1}{4},  \frac{3}{4},  \frac{4}{3}, -\frac{1}{3},  -3 \right\}\cup\left\{9,  \frac{1}{9}, \frac{8}{9}, \frac{9}{8}, -\frac{1}{8},  -8\right\}.$\end{center}
		
	\end{prop}

	\begin{proof}
		Let $a+b=c$ be a solution to \eqref{eq:unit}: $a,b,c$ are coprime integers only divisible by primes in $S$ (where $S$ is a finite set of primes). Then $a,b,c$ are pairwise coprime. They cannot all be odd for parity reasons, so exactly one of them is even. In particular, for $(a,b,c)$ to exist, it is necessary that $2 \in S$.  
		
		Assume now that $S=\{2\}$. Among $a,b,c$, two of them are odd and their only prime divisor is $2$, so they are each $\pm 1$. Up to signed permutation\footnote{We call a \emph{signed permutation} an operation transforming a solution triple $(a,b,c)$ into another by permuting the components and adding signs as needed. In other words, the ``orbit'' of $(a,b,c)$ is $\{(a,b,c);(b,a,c);(b,-c,-a);(-c,b,-a);(a,-c,-b);(-c,a,-b)\}$. This is the form that the $S_3$-action takes in this notation.}, this means that $(a,b,c)=(2,-1,1)$. 
		
		Assume finally that $S=\{2,q\}$ where $q > 2$ is a prime. If two of $a,b,c$ are $\pm 1$, then up to signed permutation $(a,b,c)=(2,-1,1)$ again. If this is not the case, then one of $a,b,c$ is $\pm q^m$, one is $\pm 2^n$, and the last one is $\pm 1$, where $m,n \geq 1$ are integers. Up to signed permutation, we may then assume that $(a,b,c)=(q^m,-2^n, \pm 1)$. Now $m=1,c=1$ gives a solution if and only if $q$ is a Fermat prime; $m=1,c=-1$ gives a solution if and only if $q$ is a Mersenne prime. By the Catalan Conjecture (proved by Mih\u{a}ilescu in \cite{catalan-conj}), the only solution when $m \geq 2$ is $(9,-8,1)$ when $q=3$. (This special case of the Catalan Conjecture -- that the only perfect \emph{prime} powers differing by~$1$ are~$8$ and~$9$ -- can also be proved by elementary means.)
	\end{proof}

	\section{Refined Selmer Schemes and the $S_3$-Action}
	
	\subsection{The Chabauty--Kim method}
	To begin with, let us recall in outline the Chabauty--Kim method, as developed in \cite{kim2005motivic,kim-unipotent-albanese,BDCKW}.
	
	Let $S$ be a finite set of primes. Let $\ZZ_S$ denote the ring of $S$-integers, and let $\mathcal{X} \to \Spec(\ZZ_S)$ be a model of a hyperbolic curve over $\ZZ_S$ with generic fibre $X$. Assume for simplicity that~$\mathcal{X}$ has good reduction outside~$S$, i.e.\ is the complement of an \'etale divisor in a smooth proper curve over~$\ZZ_S$. We choose a place $p \not\in S$, a basepoint $b$, either $S$-integral or tangential (as introduced by Deligne in \cite[(15.9)]{deligne-droite-projective})
	, and denote by $U^{\et}$ and $U^{\dR}$ the $\QQ_p$-pro-unipotent \'etale fundamental group of~$(X_{\Qbar},b)$ and the pro-unipotent de Rham fundamental group of $(X_{\QQ_p},b)$, respectively. Let $U_n^{\et}$ and $U_n^{\dR}$ be the $n$th quotients along the lower central series.
	
	Following Kim, we consider the subspace
	\[H_f^1(G_p, U_n^{\et}) \subseteq H^1(G_p, U_n^{\et})\]
	consisting of $G_p$-equivariant right $U_n^\et$-torsors which are crystalline, where~$G_p$ denotes the absolute Galois group of~$\QQ_p$ (identified with a decomposition group in~$G_\QQ$). Crystalline $U_n^\et$-torsors are equivalent, via a Dieudonné functor, to admissible $U_n^{\dR}$-torsors, which are parametrized by the right coset space $F^0\backslash U_n^{\dR}$, see \cite[p.~119]{kim-unipotent-albanese} for the definition of this equivalence and \cite[Proposition~1.4]{kim-tangential-localization} for the proof.
	(Here, $F^0 = F^0 U_n^{\dR}$ refers to the Hodge subgroup of $U_n^{\dR}$.) The resulting isomorphism $H_f^1(G_p, U_n^{\et}) \cong F^0\backslash U_n^{\dR}$ is a non-abelian analogue of the Bloch--Kato logarithm.\footnote{Kim uses the left coset space $U_n^{\dR}/F^0$ rather than the right coset space $F^0\backslash U_n^{\dR}$. The two are equivalent via the inversion map. We prefer the latter, so that $H_f^1(G_p, U_n^{\et}) \cong F^0\backslash U_n^{\dR}$ specializes to the classical abelian Bloch--Kato logarithm for $n = 1$.}

Furthermore, we have the \textit{global $S$-Selmer scheme} of $\mathcal{X}$ in depth $n$, namely the subscheme 
\[\Sel_{S,n} = \Sel_{S,n}(\mathcal{X}) = H^1_{f,S}(G_{\QQ}, U_n^{\et}) \subseteq H^1(G_{\QQ}, U_n^{\et}),\]
consisting of $G_{\QQ}$-equivariant $U_n^{\et}$-torsors that are crystalline at $p$, and unramified at all places not equal to $p$ outside $S$ 
\cite[p.~120]{kim-unipotent-albanese}. 
\footnote{Strictly speaking, $H^1(G_{\QQ}, U_n^{\et})$ is not a scheme but only a functor. The subfunctor of torsors which are unramified outside $T \coloneqq S \cup \{p\}$, however, is representable by a $\QQ_p$-scheme of finite type. It agrees with $H^1(G_T, U_n^{\et})$ where $G_T$ is the largest quotient of $G_{\QQ}$ which is unramified outside~$T$. The Selmer scheme $H^1_{f,S}(G_{\QQ}, U_n^{\et})$ is a closed subscheme of $H^1(G_T, U_n^{\et})$.}
This gives rise to the following diagram, sometimes referred to as Kim's cutter,
\begin{equation}
	\begin{tikzcd}
		\label{kims cutter}
		\mathcal{X}(\mathbb Z_{S})\arrow[]{r}{} \arrow[]{d}[swap]{j_S} & \mathcal{X}({\mathbb Z_p}) {\arrow[]{d}{j_p}} \arrow[]{dr}{j_{\mathrm{dR}}} &  \\
		\Sel_{S,n} \arrow[]{r}{\mathrm{loc}_p} & H_f^1(G_p, U_n^{\et})  \arrow{r}{\sim} & F^0\backslash U_n^{\dR}
	\end{tikzcd}
\end{equation}
for all $n$.\footnote{Strictly speaking, the vertical arrows in the diagram don't make sense, since their domains are sets but their codomains are $\QQ_p$-schemes. These arrows in fact indicate maps from a set to the set of $\QQ_p$-points of the codomain, but this is customarily omitted from the notation.} Here the vertical arrows $j_S$, $j_p$, and $j_{\dR}$ denote the global, resp. local, resp. de Rham Kummer map, assigning to each $S$-integral (respectively, $p$-adic) point~$z$ the right torsor of paths from the fixed base point $b$ to~$z$
in the respective moduli space of torsors.\footnote{We use the functional convention for path composition, i.e. $\gamma_1 \gamma_2$ goes along $\gamma_2$ first and then along $\gamma_1$. Thus, the space of paths from ~$b$ to $z$ is a right torsor under the fundamental group at the base point $b$.}
The localization map~$\loc_p$ is the map on cohomology classes given by restriction along the natural map~$G_p\to G_{\QQ}$.

Using diagram~\ref{kims cutter}, we define the \emph{Chabauty--Kim locus} in depth~$n$
\[\mathcal{X}(\ZZ_p)_{S,n} \coloneqq j_p^{-1}(\loc_p(\Sel_{S,n}(\mathcal{X}))) \]
to be the preimage of the scheme-theoretic image of the localization map~$\loc_p$ under the local Kummer map~$j_p$. This is a subset of~$\mathcal{X}(\ZZ_p)$ containing $\mathcal{X}(\ZZ_S)$ by commutativity of~\eqref{kims cutter}, and the Chabauty--Kim loci form a nested sequence of subsets
\[\mathcal{X}(\ZZ_p) \supseteq \mathcal{X}(\ZZ_p)_{S,1} \supseteq \mathcal{X}(\ZZ_p)_{S,2} \supseteq \ldots \supseteq \mathcal{X}(\ZZ_S).  \]

A fundamental fact in the Chabauty--Kim method is that when the inequality
\begin{equation}\label{eq:c-k_ineq} \dim \Sel_{S,n} < \dim H_f^1(G_p, U_n^{\et}) \end{equation}
holds, then the set~$\mathcal{X}(\ZZ_S)$ of $S$-integral points is finite. More precisely, if we consider the ideal of algebraic functions vanishing on the scheme-theoretic image of the Selmer scheme, then this can be pulled back to a non-zero ideal of Coleman functions on $\mathcal{X}(\ZZ_p)$, and the zero locus of this pulled-back ideal is by definition $\mathcal{X}(\ZZ_p)_{S,n}$.

In particular, from a sufficiently explicit description of the bottom row of~\eqref{kims cutter}, one can give explicit equations for~$\mathcal{X}(\ZZ_p)_{S,n}$. Kim conjectured that the equality 
\[\mathcal{X}(\ZZ_p)_{S,n} = \mathcal{X}(\ZZ_S) \]
holds for large enough $n$ (cf. \cite[\S 1.4]{BDCKW}), and proposed this as a strategy for computing~$\mathcal{X}(\ZZ_S)$.

\subsection{Chabauty--Kim in depth $n \leq 2$ for $\mathcal{X} = \PP^1\setminus\{0,1,\infty\}$}
\label{subsec: explicit_c-k_depth_2}
Now let $\mathcal{X} = \mathbb{P}^1\setminus \{0,1,\infty\}$ be the thrice punctured line over $\ZZ$.
For a basepoint we take the tangential basepoint corresponding to $1$ in the tangent space at $0$ under the identification $T_0 \PP^1 = \AA^1$, denoted $\overset{\to}{01}$.
We briefly sketch what is known for the Chabauty--Kim method for $\mathcal{X}$ in depth $n \leq 2$, where all the maps in (\ref{kims cutter}) can be made explicit. We recall some basic facts, following \cite{kim2005motivic} and \cite{DCW}. 

\subsubsection{Depth 1}
\label{sec:basics_depth1}
Note that the geometric étale fundamental group $\pi_1(X_{\overline{\QQ}})$ of $X$ is the free profinite group in two generators corresponding to loops around $0$ and $1$. Hence, we have
\[
U_1^{\et} = (U^{\et})^{\ab} = (\pi_1(X_{\overline{\QQ}})^{\ab} \otimes \QQ_p) 
\cong \QQ_p(1) \oplus \QQ_p(1).
\]

Kummer theory yields an isomorphism
\[\Sel_{S,1} = H_{f,S}^1(G_{\QQ}, \QQ_p(1)^2) = H_{f,S}^1(G_{\QQ}, \QQ_p(1))^2 \cong \AA^S \times \AA^S,\]
where we choose coordinates $(x_{\ell})_{\ell \in S}$, $(y_{\ell})_{\ell \in S}$ of $\AA^S \times \AA^S$ in such a way that the global nonabelian Kummer map $j_S$ is given by
\[
z\mapsto ((v_{\ell}(z))_{\ell \in S}, (v_{\ell}(1-z))_{\ell \in S}).
\]
The de Rham side has an explicit description in depth 1 as well. Similar to the above, Kummer theory allows us to identify the local Selmer scheme with $\AA^2$ and $j_{\dR}$ is given by 
\[z \mapsto (\log(z), \log(1-z)) \]
in depth 1. Here $\log$ refers to the $p$-adic logarithm, defined as the Coleman integral
\begin{align*}
	\log(z) &= \int^z_{\overset{\to}{01}} \frac{\diff x}{x}.
\end{align*}
This gives a description of the localization map as
\begin{align*}
	\Sel_{S,1} = \AA^S \times \AA^S &\to H_f^1(G_p, \QQ_p(1)^2) = \AA^2\\
	((x_{\ell})_{\ell \in S},(y_{\ell})_{\ell \in S}) &\mapsto (\sum \log(\ell) x_{\ell}, \sum \log(\ell) y_{\ell}),
\end{align*}
and hence we have completely described the fundamental Chabauty--Kim diagram~\eqref{kims cutter} in depth 1: it looks like

\[
\begin{tikzcd}[column sep = small]
	z \in \hspace{-14em} & \mathcal{X}(\ZZ_S) \arrow[]{r}{}\arrow[]{dd}[swap]{j_S}\arrow[maps to, xshift=-2em, shorten=.3em]{ld}{} & \mathcal{X}(\ZZ_p) \arrow[]{dd}[swap]{j_p}\arrow[]{rdd}[]{j_\dR} & \\
	\scriptsize{\begin{matrix}
			((v_{\ell}(z))_{\ell \in S}, (v_{\ell}(1-z))_{\ell \in S}) \\
			\rotatebox[origin=c]{270}{$\in$}
	\end{matrix}} & & & \\[-25pt]
	\mathbb{A}^S\times\mathbb{A}^S \arrow[equal]{r} & \Sel_{S,1} \arrow[]{r}{\loc_p} & H^1_f(G_p,\QQ_p(1)^2) \arrow[equal]{r} & \mathbb{A}^2. \\[-25pt]
	& \scriptsize{\begin{matrix}
			\rotatebox[origin=c]{90}{$\in$} \\
			(x_\ell)_{\ell \in S}, (y_\ell)_{\ell \in S})
	\end{matrix}} \arrow[maps to, yshift=-5pt]{r}{} & \scriptsize{\begin{matrix}
			\rotatebox[origin=c]{90}{$\in$} \\
			(\sum\log(\ell)x_\ell, \sum\log(\ell)y_\ell)
	\end{matrix}} & 
\end{tikzcd}
\]

\subsubsection{Depth 2}
\label{sec:basics_depth2}

For $* = \et, \dR$ there is an exact sequence of algebraic groups over $\QQ_p$ 
\begin{equation}
	\label{eq:U2_sequence}
	\begin{tikzcd}
		1 \arrow[r] & (U^*)^{[2]}/(U^*)^{[3]} \arrow[d, "\cong" left] \arrow[r] & U_2^* \arrow[r] & U_1^* \arrow[d, "\cong" right] \arrow[r] & 1\\
		& \QQ_p(2) & & \QQ_p(1) \times \QQ_p(1), & 
	\end{tikzcd}
\end{equation}
where the Tate twist is to be interpreted in the respective realization.
The corresponding sequence on Lie algebras splits as Galois representations, see \cite[\S 5]{DCW}. 
The construction in \cite[\S5]{DCW} goes via the theory of motives, but can be described equivalently in terms of realizations.
The Lie algebra of $U_2^\et$ is unramified away from~$p$ and crystalline at~$p$, so its extension class lies in $\mathrm{Ext}^1(\QQ_p(1)^2,\QQ_p(2))=\rH^1_f(G_\QQ,\QQ_p(1))^2=0$.
Thus the extension of Lie algebras for $\Lie(U_2^\et)$ induced by \ref{eq:U2_sequence} has a unique $G_\QQ$-equivariant splitting, and applying the $D_\dR$ functor gives the corresponding splitting for $\Lie(U_2^\dR)$.

Hence, as in \cite[\S 5]{DCW}, via these splittings one can identify $U_2^*$ with the \textit{Heisenberg group}
\[
H^* = \begin{pmatrix}
	1 & \QQ_p(1) & \QQ_p(2)\\
	0 & 1 & \QQ_p(1) \\
	0 & 0 & 1
\end{pmatrix}.
\]

Note $H^i(G_{\QQ}, \QQ_p(2)) = 0$ for $i = 1,2$ by Soulé vanishing, see \cite[Theorem 1]{soule:higher} for the case $p$ odd.
The case $p = 2$ is addressed in unpublished work of Sharifi, see \cite{sharifi:soule}, but we will not need this.
Thus the abelianization map $\pi$ induces an isomorphism 
\[
\pi_* \colon \Sel_{S,2} \xrightarrow{\sim} \Sel_{S,1} = H_f^1(G_{\QQ}, \QQ_p(1)^2).
\]

Note that
\[
F^0 U_1^{\dR} = F^0 H^1_{\dR}(\mathcal{X})^\vee = 0,
\]
thus by \ref{eq:U2_sequence} we have $F^0 U_2^{\dR} = \{1\}$ and hence
\[
H_f^1(G_p, U_2^{\et}) \cong U_2^{\dR} = H^{\dR}.
\]

Finally, in the identification $U_2^{\dR} \cong \AA^3$, the map $j_{\dR}$ is given by locally $p$-adic analytic functions as
\[
z \mapsto (\log(z), \log(1-z), -\Li_2(z)).
\]

Note that $\Li_n$ denotes the $p$-adic polylogarithm, which is given as an iterated Coleman integral as
\[   
\Li_n(z) = \int^z_{\overset{\to}{01}} \underbrace{\frac{\diff x}{x}\cdots \frac{\diff x}{x}}_{n-1\text{ times}}\frac{\diff x}{1-x},
\]
where we follow Kim's convention (see page $109$ of \cite{kim-unipotent-albanese})
that the rightmost integrand is integrated ``first''.
They satisfy several useful identities (see \cite[Prop 6.4]{coleman82}):
\begin{align*}
	\Li_2(z) + \Li_2(1-z) &= -  \log(z) \log(1-z) ,\\
	\Li_2(z) + \Li_2(z^{-1}) &= -\frac 12 \log(z)^2 .
\end{align*}
We sum up what is known for the Chabauty--Kim method in depth 2 in the following diagram

\begin{equation}\label{kims cutter depth 2}
	\begin{tikzcd}[column sep = small]
		z \in \hspace{-14em} & \mathcal{X}(\ZZ_S) \arrow[]{r}{}\arrow[]{dd}[swap]{j_S}\arrow[maps to, xshift=-2em, shorten=.3em]{ld}{} & \mathcal{X}(\ZZ_p) \arrow[]{dd}[swap]{j_p}\arrow[]{rdd}[]{j_\dR} & \\
		\scriptsize{\begin{matrix}
				((v_{\ell}(z))_{\ell \in S}, (v_{\ell}(1-z))_{\ell \in S}) \\
				\rotatebox[origin=c]{270}{$\in$}
		\end{matrix}} & & & \\[-25pt]
		\mathbb{A}^S\times\mathbb{A}^S \arrow[equal]{r} & \Sel_{S,2} \arrow[]{r}{\loc_p}\arrow[equal]{d}[left]{\pi_*} & H^1_f(G_p,U_2^\et) \arrow[equal]{r}\arrow[]{d}{\pi_*} & \mathbb{A}^3 \arrow[]{d}{p_{1,2}} \\
		& \Sel_{S,1} \arrow[]{r}{\loc_p} & H^1_f(G_p,\QQ_p(1)^2) \arrow[equal]{r} & \mathbb{A}^2. \\[-25pt]
		& \scriptsize{\begin{matrix}
				\rotatebox[origin=c]{90}{$\in$} \\
				((x_\ell)_{\ell \in S}, (y_\ell)_{\ell \in S})
		\end{matrix}} \arrow[maps to, yshift=-5pt]{r}{} & \scriptsize{\begin{matrix}
				\rotatebox[origin=c]{90}{$\in$} \\
				(\sum\log(\ell)x_\ell, \sum\log(\ell)y_\ell)
		\end{matrix}} & 
	\end{tikzcd}
\end{equation}


			\subsection{The localization map in depth 2}\label{subsec:locdepth2}
			With the above choices of coordinates the localization map 
			\[h = \loc_p \colon \Sel_{S,2} = \AA^S \times \AA^S \to U_2^{\dR} = \AA^3 \]
			is of the form
			\[(x,y) = ((x_{\ell})_{\ell \in S}, (y_{\ell})_{\ell \in S}) \mapsto \left(\sum_{\ell \in S} \log(\ell) x_{\ell}, \sum_{\ell \in S} \log(\ell) y_{\ell}, h_3(x,y)\right), \]
			where the third component~$h_3$ is as yet undetermined. In \cite{DCW}, Dan-Cohen and Wewers study~$h_3$ using mixed Tate motives. They prove that it is bilinear,
			i.e.\ of the form
			\[h_3(x,y) = \sum_{\ell, q \in S} a_{\ell, q} x_{\ell} y_{q}, \]
			and give an algorithm based on Tate's computation of $K_2(\QQ)$ \cite[Theorem 11.6]{milnor} for computing the coefficients~$a_{\ell,q}\in\QQ_p$ in the case that~$\ell,q<p$ \cite[\S11]{DCW}.
			
			We want to explain here how to modify the algorithm of Dan-Cohen--Wewers to compute the coefficients~$a_{\ell,q}$ for all~$\ell,q\neq p$. From now until the end of this section, we assume that our prime~$p$ is odd. As in \cite{DCW}, the strategy revolves around two facts:
			
			\begin{enumerate}
				\item\label{condn:alq_normalisation} For $z \in \mathcal{X}(\ZZ_S)$ commutativity of~\eqref{kims cutter depth 2} yields
				\[h_3((v_{\ell}(z))_{\ell \in S}, (v_{\ell}(1 - z))_{\ell \in S}) = - \Li_2(z). \]
				\item\label{condn:alq_twisted_antisymmetry} The coefficients of this bilinear form satisfy a ``twisted antisymmetry relation'' (\cite[Prop. 10.4]{DCW})
				\[ a_{\ell,q} + a_{q,\ell} = \log(\ell)\cdot \log(q).\]
			\end{enumerate}
			
			\begin{rem}\label{rem:motivic_periods}
				The numbers $a_{\ell,q}$ are the $p$-adic realizations of the motivic periods $f_{\tau_{\ell} \tau_{q}}$ from \cite[§4.1]{corwin-dan-cohen:computational_i}, while $\log(\ell)$ is the $p$-adic realization of $f_{\tau_{\ell}}$. From this point of view, the twisted antisymmetry relation is a consequence of the shuffle product identity
				\[ f_{\tau_{\ell} \tau_q} + f_{\tau_q \tau_{\ell}} = f_{\tau_{\ell}} f_{\tau_q}. \]
			\end{rem}
			
			Note that each $S$-integral point on~$\mathcal{X}$ yields, via (i), a linear constraint on the coefficients~$a_{\ell,q}$. So, when there are sufficiently many $S$-integral points on~$\mathcal{X}$, these can determine the coefficients~$a_{\ell,q}$. For instance, we obtain particularly simple formulas for $a_{2,q}$ and $a_{q,2}$ if $q$ is a Fermat or Mersenne prime.
			
			\begin{lemma}
				\label{lem: Fermat Mersenne DCW coefficients}
				Let $q = 2^n \pm 1$ be a Fermat or Mersenne prime. Then the coefficients $a_{2,q}$ and $a_{q,2}$ are given by
				\[ a_{2,q} = -\frac1{n} \Li_2(1 \mp q), \quad a_{q,2} = -\frac1n \Li_2(\pm q). \]
			\end{lemma}
			
			\begin{proof}
				Let $S =\{2,q\}$. Then $1 \mp q = \mp 2^n$ is contained in $\mathcal X(\ZZ_S)$ since $1 - (1\mp q) = \pm q$ is also an $S$-unit. (Here, $\pm$ means~$+$ in the Fermat case and $-$ in the Mersenne case; $\mp$ denotes the opposite sign.) From~(i) we obtain
				\[ -\Li_2(1 \mp q) = h_3((n,0,0,1)) = a_{2,q} \cdot n \cdot 1, \]
				hence $a_{2,q} = -\frac1{n}\Li_2(1 \mp q)$. Using $\pm q \in \mathcal X(\ZZ_S)$ similarly gives the formula for $a_{q,2}$.
			\end{proof}
			
			In general, there may not be enough $S$-integral points on~$\mathcal{X}$ to determine the coefficients~$a_{\ell,q}$, so our strategy is to enlarge the set~$S$ so as to acquire enough $S$-integral points. In order to do this, the following lemma is essential.
			
			\begin{lemma}[{\cite[\S 10.2]{DCW}}]
				\label{coefficients invariant under enlarging S}
				Let $S \subseteq S'$ be finite sets of primes not containing $p$. Then the inclusion $\Sel_{S,2} \subseteq \Sel_{S',2}$ corresponds to the subspace inclusion $\AA^S \times \AA^S \subseteq \AA^{S'} \times \AA^{S'}$ with $x_{\ell'} = y_{\ell'} = 0$ for $\ell' \in S' \setminus S$, and the localization map $\loc_p$ on $\Sel_{S',2}$ restricts to the localization map on $\Sel_{S,2}$. In particular, the bilinear form coefficients $a_{\ell,q}$ of the third component of $\loc_p$ are independent of the set $S \supseteq \{\ell,q\}$ with $p \not\in S$.
			\end{lemma}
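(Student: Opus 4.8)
The plan is to reduce the statement to functoriality of the Selmer scheme construction in the set $T$ of ramification primes, combined with the explicit depth-$1$ description already recorded. The key observation is that both $\Sel_{S,2}$ and $\Sel_{S',2}$ are cut out inside $H^1(G_T,U_2^\et)$ (for $T \supseteq S' \cup \{p\}$) by local conditions: crystalline at $p$, and unramified at every prime $\ell' \in T \setminus \{p\}$ for primes $\ell' \notin S$ (resp. $\ell' \notin S'$). Since $S \subseteq S'$, the condition defining $\Sel_{S,2}$ is strictly stronger, so $\Sel_{S,2} \subseteq \Sel_{S',2}$ as closed subschemes of the common ambient space $H^1(G_T, U_2^\et)$, and the localization maps $\loc_p$ agree because both are literally the restriction map $H^1(G_T, U_2^\et) \to H^1_f(G_p, U_2^\et)$ in group cohomology. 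So the only thing to check is the claimed form of the inclusion in the chosen coordinates.

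First I would invoke the isomorphism $\pi_* \colon \Sel_{S,2} \xrightarrow{\sim} \Sel_{S,1}$ (and likewise for $S'$), which is compatible with enlarging $S$ because it is induced by the abelianization map $U_2^\et \to U_1^\et$, a morphism of $G_T$-groups independent of $S$. This transports the question entirely to depth $1$. In depth $1$, Kummer theory identifies $\Sel_{S,1} = H^1_f(G_T, \QQ_p(1))^2$, and the coordinates $(x_{\ell})_{\ell\in S}$, $(y_\ell)_{\ell \in S}$ are by construction dual to the basis of $H^1_f(G_T,\QQ_p(1)) \cong \ZZ_S^\times \otimes \QQ_p$ given by the primes $\ell \in S$ (together with the local condition at $p$, which on the unit side corresponds to no further constraint). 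The inclusion $\ZZ_S^\times \otimes \QQ_p \hookrightarrow \ZZ_{S'}^\times \otimes \QQ_p$ sends the basis for $S$ to part of the basis for $S'$, so dually the coordinates $x_{\ell'}, y_{\ell'}$ for $\ell' \in S' \setminus S$ vanish on the image. This is exactly the asserted coordinate description.

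Finally, for the statement about the bilinear coefficients: the third component $h_3$ of $\loc_p$ on $\Sel_{S',2}$ is $\sum_{\ell,q \in S'} a_{\ell,q}^{(S')} x_\ell y_q$, and restricting to the subspace $\Sel_{S,2}$ (where $x_{\ell'} = y_{\ell'} = 0$ for $\ell' \notin S$) kills every term except those with $\ell, q \in S$, leaving $\sum_{\ell,q\in S} a_{\ell,q}^{(S')} x_\ell y_q$. Since this restriction equals $h_3$ for $\Sel_{S,2}$, namely $\sum_{\ell,q\in S} a_{\ell,q}^{(S)} x_\ell y_q$, and the monomials $x_\ell y_q$ are linearly independent functions on $\AA^S \times \AA^S$, we conclude $a_{\ell,q}^{(S)} = a_{\ell,q}^{(S')}$ for all $\ell, q \in S$. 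Applying this with $S = \{\ell, q\}$ and arbitrary $S' \ni \ell, q$ (with $p \notin S'$) gives the $S$-independence of $a_{\ell,q}$.

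I expect the main obstacle to be purely bookkeeping rather than conceptual: one must be careful that the coordinate systems on $\Sel_{S,1}$ and $\Sel_{S',1}$ are chosen compatibly (i.e. via the same identification of $H^1_f(G_T,\QQ_p(1))$ with a space of units), and that the local condition at $p$ genuinely imposes nothing extra on the unit side in depth $1$ — this is where one uses that $H^1_f(G_p,\QQ_p(1)) = \QQ_p^\times \otimes \QQ_p$ receives $\ZZ_S^\times\otimes\QQ_p$ injectively, so no collapsing occurs. Everything else is formal from the functoriality of group cohomology restriction maps in the coefficient group and in $T$.
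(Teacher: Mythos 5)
Your proof is correct and follows essentially the same route as the paper: reduce to depth~1 via the isomorphism $\pi_*\colon \Sel_{S,2}\xrightarrow{\sim}\Sel_{S,1}$, identify everything via Kummer theory, and observe that the localization at $p$ is literally cohomological restriction and hence unchanged under enlarging~$S$. The only organizational difference is that you view both $\Sel_{S,2}$ and $\Sel_{S',2}$ as cut out by local conditions inside a single ambient $H^1(G_T, U_2^\et)$ for a common $T\supseteq S'\cup\{p\}$, whereas the paper takes $T=S\cup\{p\}$ and $T'=S'\cup\{p\}$ and compares $H^1(G_T,\cdot)$ with $H^1(G_{T'},\cdot)$ via the inflation map attached to $G_{T'}\twoheadrightarrow G_T$; the two framings are equivalent (inflation identifies $H^1(G_T,\cdot)$ with the classes in $H^1(G_{T'},\cdot)$ unramified outside $T$), and your single-ambient version is arguably a bit cleaner bookkeeping. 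Your final coefficient-matching argument via linear independence of the monomials $x_\ell y_q$ is exactly what is needed for the "in particular" clause.
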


			As a result of Lemma~\ref{coefficients invariant under enlarging S}, the maps~$h_3$ for varying sets~$S$ induce a bilinear map
			\[
			h_3\colon E\otimes E \to \QQ_p
			\]
			where
			\[E=\QQ\otimes_{\ZZ}\ZZ_{(p)}^\times=\varinjlim_{S\not\ni p}\QQ^S,\] 
			which is an infinite-dimensional $\QQ$-vector space (written additively). For any element~$t\in\ZZ_{(p)}$ we write~$[t]:= 1\otimes t\in E$, so that $h_3([\ell],[q])=a_{\ell,q}$. Since $K_2( \ZZ_{(p)}) \otimes \QQ $ vanishes\footnote{
				This follows from the vanishing of $K_2(\QQ) \otimes \QQ$ and the existence of the short exact sequence
				\[0 \to K_2(\ZZ_{(p)}) \to K_2(\QQ) \to \bigoplus_{q \neq p} \kappa(q)^{\times} \to 0. \] 
				See \S 11 of \cite{milnor}.
			}, we know that
			the vector space $E\otimes E$ is spanned by vectors of the form $[t]\otimes [1-t]$
			with $t,1-t\in\ZZ_{(p)}^\times$, which we refer to as \textit{Steinberg elements}. 
			
			Our aim here is, given two prime numbers~$\ell$ and~$q$, to compute a decomposition of $[\ell] \otimes [q]$ as a sum of Steinberg elements and symmetric elements, i.e.\ generators of the form $[u] \otimes [v] + [v] \otimes [u]$ for $u,v \in \ZZ_{(p)}^\times$. Hence, we want a decomposition of the form
			\[
			[\ell]\otimes[q] = \sum_i\lambda_i\cdot([u_i]\otimes [v_i] + [v_i] \otimes [u_i]) + \sum_j\mu_j\cdot[t_j]\otimes[1-t_j]
			\]
			for some elements $u_i,v_i,t_j\in \ZZ_{(p)}^\times$ with $1-t_j\in\ZZ_{(p)}^\times$ and rational coefficients $\lambda_i,\mu_j$. Taking the image of such a decomposition under the map~$h_3$ yields a value for~$a_{\ell,q}$, namely
			\[
			a_{\ell,q} = \sum_i\lambda_i\log(u_i)\log(v_i) - \sum_j\mu_j\cdot\Li_2(t_j).
			\]
			
			To simplify the expressions, we do the computation in $\bigwedge^2E$, i.e.~first consider a decomposition
			\begin{equation}
				\label{eq: wedge decomposition}
				[\ell]\wedge[q] = \sum_i\lambda_i[t_i]\wedge[1-t_i].
			\end{equation}
			This then yields a decomposition in the tensor-square $E\otimes E$ of the desired form, namely
			\[
			[\ell]\otimes[q] = \frac12\bigl([\ell]\otimes[q]+[q]\otimes[\ell]\bigr) +  \frac12 \sum_i\lambda_i \Bigl([t_i]\otimes[1-t_i] - [1-t_i]\otimes[t_i]\Bigr),
			\]
			and so we obtain
			\begin{equation}
				\label{eq:coeff_h3}
				a_{\ell, q} = \frac12\log(\ell)\log(q) + \frac12\sum_i\lambda_i(\Li_2(1- t_i)-\Li_2(t_i)).
			\end{equation}
			
			\begin{rem}[Differences from \cite{DCW}]\label{rem:differences}
				Note that in \cite{DCW} the authors consider the vector space
				\[E' = \QQ \otimes \QQ^{\times} = \varinjlim_S \QQ^S, \]
				which is the $\QQ$-vector space spanned by all primes, including~$p$, and give an algorithm for a decomposition of $\ell \otimes q$ in $E' \otimes E'$. If $p > \ell, q$, the decomposition found by the algorithm in \cite{DCW} happens to only involve Steinberg elements $[t]\otimes[1-t]$ with $t,1-t\in\ZZ_{(p)}^\times$, so yields a description of the coefficient $a_{\ell,q}$ (in our notation). However, if~$q$ or~$\ell$ is larger than~$p$, then the decompositions produced by \cite{DCW} can involve Steinberg elements with $t\notin\ZZ_{(p)}^\times$, in which case the pairing~$h_3$ is not defined at~$[t]\otimes[1-t]$ and we cannot use the decomposition to control the value of~$a_{\ell,q}$. Our decomposition on the other hand allows us to specialize to an arbitrary odd prime $p$ as we avoid numbers containing factors of $p$.
				
				Nonetheless, it seems reasonable to expect that the pairing~$h_3$ should extend to a pairing on all of~$E'$ satisfying the conditions~\ref{condn:alq_normalisation} and~\ref{condn:alq_twisted_antisymmetry}, in which case the algorithm in \cite{DCW} would work without modification, as suggested to us by the referees. Indeed, as in Remark~\ref{rem:motivic_periods} the coordinates $a_{\ell,q}$ of the map~$h_3$ are in a natural way the $p$-adic realizations of motivic periods $f_{\tau_\ell\tau_q}$ which are unramified outside~$\{\ell,q\}$ and independent of the choices of~$p\notin\{\ell,q\}$ and~$S\supseteq\{\ell,q\}$. As explained in~\cite{chatzistamatiou-unver:p-adic_periods}, after choosing a branch of the $p$-adic polylogarithm one can extend the $p$-adic realization to motivic periods which are ramified at~$p$, and hence one can extend the pairing~$h_3$ to be defined on $E'\otimes E'$ by taking its coefficients to be the $p$-adic realization of $f_{\tau_\ell\tau_q}$, whether or not $p\in\{\ell,q\}$.
				
				
				What is missing from this picture is why conditions~\ref{condn:alq_normalisation} and~\ref{condn:alq_twisted_antisymmetry} should hold for this extended pairing~$h_3$, where in~\ref{condn:alq_normalisation} the $p$-adic dilogarithm is extended to all of $\QQ_p\setminus\{1\}$ in the usual way (see e.g.\ \cite[\S2]{besser_lip_2008}; the definition also depends on a choice of branch of the logarithm). The twisted antisymmetry relation~\ref{condn:alq_twisted_antisymmetry} should follow formally by an argument similar to that of \cite[\S10]{DCW}, but condition~\ref{condn:alq_normalisation} is a lot less obvious to us, since it involves relating the $p$-adic period points of \cite{chatzistamatiou-unver:p-adic_periods} to the definition of the $p$-adic dilogarithm. It is possible that one could prove this by describing the pairing~$h_3$ in terms of the syntomic regulator and using the relation to polylogarithms of \cite[Theorem~1.6]{besser-de_jeu:syntomic_regulators}, but doing so would require checking a number of technical compatibilities which are orthogonal to the main thrust of this paper. It is precisely to avoid tackling these kinds of foundational issues that we preferred to modify the algorithm from \cite{DCW} instead, to avoid factors of~$p$ by hand.

				

			\end{rem}
			
			We now describe how to construct a decomposition~\eqref{eq: wedge decomposition} of $[\ell] \wedge [q]$ as a rational linear combination of Steinberg elements $[t] \wedge [1-t]$ with $t, 1-t \in \ZZ_{(p)}^\times$. We may assume that $\ell < q$. We proceed by induction on $(q, \ell)$, ordered lexicographically. More precisely, we show that $[\ell] \wedge [q]$ can be expressed as a $\QQ$-linear combination of Steinberg elements, terms of the form $[\ell'] \wedge [q']$ with $\ell' < q' < q$, and in the case $\ell > 2$ the particular element $[2] \wedge [q]$. Our algorithm is based on the following observation:

			\begin{lemma}
				Let $\Sigma$ denote the finite set of integers $z$ of absolute value $<q$, together with the even integers $z$ of absolute value $<2q$. Then for all $z_0\in\Sigma$, there is a $z_1\in\Sigma$ such that $q\mid \ell z_0-z_1$ and neither $z_1$ nor $r_1 = \frac{\ell z_0-z_1}q$ is divisible by~$p$.
			\end{lemma}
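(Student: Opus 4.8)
The plan is to reduce the lemma to a short pigeonhole count, after first pinning down the structure of $\Sigma$ modulo $q$.

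First I would describe $\Sigma \bmod q$ precisely. Note that $q$ is an odd prime, since $\ell < q$. Inside the interval $(-2q, 2q)$ each nonzero residue class $r$ modulo $q$ (take $1 \le r \le q-1$) is represented by exactly the four integers $r - 2q$, $r - q$, $r$, $r + q$. Of these, $r$ and $r - q$ always lie in $\Sigma$ because their absolute values are $< q$; and exactly one of $r - 2q$, $r + q$ lies in $\Sigma$, since both have absolute value $< 2q$ and exactly one of them is even (as $q$ is odd). Hence every nonzero residue class modulo $q$ has exactly three representatives in $\Sigma$, and these three form a block $\{a,\, a+q,\, a+2q\}$ of consecutive terms of an arithmetic progression of common difference $q$. (The class of $0$ meets $\Sigma$ only in $0$ itself.)

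Next I would set up the dichotomy. Fix $z_0 \in \Sigma$ and let $r \in \{0, \ldots, q-1\}$ with $r \equiv \ell z_0 \pmod q$. If $r = 0$ then $q \mid \ell z_0$, and since $q$ is prime with $q \nmid \ell$ (because $0 < \ell < q$) this forces $q \mid z_0$, hence $z_0 = 0$ --- a degenerate case which does not arise when the lemma is applied, since there $z_0 \in \ZZ_{(p)}^\times$. So assume $r \neq 0$. Then the admissible values of $z_1$ (those in $\Sigma$ with $q \mid \ell z_0 - z_1$) are exactly the three elements $w_1 = a$, $w_2 = a + q$, $w_3 = a + 2q$ of the previous paragraph, and as $z_1$ runs through $w_1, w_2, w_3$ the integer $r_1 = (\ell z_0 - z_1)/q$ runs through three consecutive integers $s,\, s-1,\, s-2$.

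Finally I would finish by ruling out the bad candidates. Since $p$ is odd and coprime to $q$, neither $q$ nor $2q$ vanishes modulo $p$, so $w_1, w_2, w_3$ are pairwise distinct modulo $p$ and therefore at most one of them is divisible by $p$; likewise at most one of the consecutive integers $s, s-1, s-2$ is divisible by $p$. Hence at most two of the three candidates for $z_1$ are excluded, and the remaining one satisfies $q \mid \ell z_0 - z_1$, $p \nmid z_1$ and $p \nmid r_1$ simultaneously, as required. The argument is elementary throughout; the only step demanding genuine care is the modular bookkeeping of the second paragraph --- in particular verifying that one always obtains exactly three representatives in $\Sigma$, never two or four, which is precisely what makes the final count work --- together with noticing the harmless corner case $z_0 = 0$.
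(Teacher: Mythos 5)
Your proof is correct and follows the same pigeonhole strategy as the paper: identify three candidates for $z_1$ forming an arithmetic progression of common difference $q$, note that the corresponding $r_1$'s are three consecutive integers, and use that $p$ is odd and coprime to $q$ to conclude that at most one candidate from each progression is divisible by $p$. You additionally supply the residue-class bookkeeping the paper leaves implicit (why each nonzero class mod $q$ has exactly three representatives in $\Sigma$ and why they form a block of the AP), and you correctly flag the degenerate case $z_0=0$, for which the lemma as literally stated fails but which never occurs in the algorithm since the $z_i$ are all prime to $p$.
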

			
			\begin{proof}
				There are three values $z_1\in \Sigma$ such that $z_1\equiv \ell z_0$ modulo~$q$: two which have absolute value~$<q$ and one which is even and has absolute value in~$(q,2q)$. These values of~$z_1$ form an arithmetic progression of common difference $q$. The corresponding values of $r_1$ form an arithmetic progression of common difference $-1$. It follows that at least one of these values has both $z_1$ and $r_1$ not divisible by~$p$.
			\end{proof}
			
			We use the lemma to find sequences $1=z_0,z_1,\dots$ and $r_1,r_2,\dots$ of integers, all prime to $p$, such that $\lvert z_i\rvert<q$ and
			\[
			qr_i = \ell z_{i-1}-z_i \hspace{0.4cm}\text{or}\hspace{0.4cm} qr_i = \ell z_{i-1}-2z_i
			\]
			for all $i$. (In the case $\ell = 2$ we require $qr_i = \ell z_{i-1}-z_i$ but allow $\lvert z_i \rvert < 2q$ rather than $\lvert z_i \rvert < q$.)
			Every $z_i$ and every $r_i$ have no prime factor $\geq q$, the latter since $\lvert r_i\rvert \leq \frac1q \left(\lvert \ell \rvert \lvert z_{i-1} \rvert + 2 \lvert z_i \rvert \right) \leq\frac1q\left((q-1)^2+2(q-1)\right)<q$.
			
			From this point, we argue as in \cite{DCW}. Define elements $f_i\in\bigwedge^2E$ for $i\geq1$ by
			\[
			f_i \coloneqq [\ell]\wedge[q] + [z_{i-1}]\wedge[q] - [z_i]\wedge[q] \,.
			\]
			Since $\Sigma$ is finite, there must be indices $m<n$ such that $z_m=\pm z_n$, and hence
			\[
			[\ell]\wedge[q] = \frac1{n-m}\cdot\sum_{i=m+1}^nf_i \,.
			\]
			
			We have the identity
			\begin{align}
				\label{eq: f_i without 2}
				f_i &= \left[\tfrac{z_i}{\ell z_{i-1}}\right]\wedge\left[\tfrac{r_i}{\ell z_{i-1}}\right] -\left[\frac{z_i}{\ell z_{i-1}}\right]\wedge\left[1-\frac{z_i}{\ell z_{i-1}}\right] \hspace{0.4cm}\text{or} \\
				\label{eq: f_i with 2}
				f_i &= \left[\tfrac{2z_i}{\ell z_{i-1}}\right]\wedge\left[\tfrac{r_i}{\ell z_{i-1}}\right] -\left[\tfrac{2z_i}{\ell z_{i-1}}\right]\wedge\left[1-\tfrac{2z_i}{\ell z_{i-1}}\right]+[2]\wedge[q] \,,
			\end{align}
			according as $\ell z_{i-1}-qr_i$ is equal to $z_i$ or $2z_i$. In either case, because the prime factors of $r_i, z_i, z_{i-1}$ and $\ell$ are smaller than $q$ and distinct from $p$, $f_i$ can be expressed as a linear combination of smaller basis elements, Steinberg elements, and the particular element $[2]\wedge[q]$.
			
			The element $[2]\wedge[q]$ appears only if $\ell \neq 2$ by construction, so that it can be expressed inductively in terms of Steinberg elements.
			
			An implementation of this algorithm in SageMath is provided in~\cite{dcw_coefficients}.
			
			\begin{rem}
				This algorithm does not give us any control over the number of terms of the resulting decompositions of the generators $[\ell]\wedge[q]$ in $\bigwedge^2 E$. However, using some linear algebra, one can easily get new decompositions with an explicitly bounded number of terms:
				
				Given a positive integer $b$, we consider the subspace $V_b$ of $\bigwedge^2 E$ generated by pairs of primes $[\ell] \wedge [q]$ with $\ell< q< b$ such that $\ell,q \neq p$, its ``canonical basis''. The algorithm produces a generating family of $V_b$ by Steinberg elements $[t] \wedge [1-t]$ where $t,1-t$ are rationals containing only prime factors $< b$ distinct from $p$, whose coordinates in the canonical basis are easy to compute. Inside such a generating family, there is a basis of $V_b$ made up of Steinberg elements, and we can compute the coordinates of the vectors of the canonical basis in this Steinberg basis. This yields decompositions with at most $\dim(V_b)$ terms. The above is essentially the procedure followed by our implementation in Sage. 
			\end{rem}
			
			\subsubsection{Examples}
			
			We give some examples of coefficients $a_{\ell,q}$ for primes $\ell, q$ different from a given odd prime $p$, as well as the corresponding decomposition of $[\ell] \wedge [q]$ in $\bigwedge^2 E$ where $E = \QQ \otimes_{\ZZ} \ZZ_{(p)}^{\times}$.
			
			Note that if $\ell = 2$ and $q$ is a prime of the form $q = 2^n \pm 1$, i.e. a Mersenne or Fermat prime, we have
			\[[2] \wedge [q] = 
			\frac{1}{n} \cdot \left(\left[1\mp q\right] \wedge \left[\pm q\right] \right) , \] 
			yielding the coefficient \[a_{2,q} = \frac12\cdot\log(2)\log(q) - \frac1{2n}\cdot\Bigl(\Li_2(1\mp q)-\Li_2(\pm q)\Bigr) = -\frac1{n} \cdot \Li_2(1 \mp q).\]
			This is the same value we calculated in Lemma~\ref{lem: Fermat Mersenne DCW coefficients} above using the commutativity of the Chabauty--Kim diagram.
			
			
			We give some further examples using the algorithm described above (still fixing $p=3$):
			
			\begin{itemize}
				\item 
				Let $\{\ell,q\}=\{2,11\}$. The algorithm yields
				\[[2] \wedge [11] = -\tfrac{1}{5} \cdot \left[\tfrac{5}{16}\right] \wedge \left[\tfrac{11}{16}\right] + \tfrac{2}{5}\cdot \left[-4\right] \wedge \left[5\right]  + \tfrac{1}{5}\cdot \left[-10\right] \wedge \left[11\right]. \]
				This determines the coefficient $a_{2,11}$ as
				\begin{align*}
					a_{2,11} &= \tfrac12 \log(2)\log(11) + \tfrac{1}{2} \Bigl( -\tfrac15\left(\Li_2\left(\tfrac{11}{16}\right) - \Li_2\left(\tfrac{5}{16}\right)\right)\\ 
					&\quad + \tfrac25 \left(\Li_2\left(5\right) - \Li_2\left(-4\right) \right)
					+ \tfrac15 \left( \Li_2\left(11\right) - \Li_2\left(-10\right)\right)
					\Bigr).  
				\end{align*}
					\item Let $\{\ell,q\} = \{5,7\}$. The algorithm yields
					\begin{align*}
						[5] \wedge [7] = -\tfrac{1}{2} \cdot &[-4] \wedge [5] + \left[-\tfrac{5}{2}\right] \wedge \left[\tfrac{7}{2}\right] - \tfrac{1}{3} \cdot \left[\tfrac{1}{8}\right] \wedge \left[\tfrac{7}{8}\right],
					\end{align*}
					determining the coefficient $a_{5,7}$ as
					\begin{align*}
						a_{5,7} &= \tfrac{1}{2} \log(5) \log(7) - \tfrac{1}{2} \Bigl( -\tfrac12 \left(\Li_2\left(5\right) - \Li_2\left(-4\right) \right)\\
						&\quad + \left(\Li_2\left(\tfrac{7}{2}\right) - \Li_2\left(-\tfrac{5}{2}\right) \right)
						- \tfrac{1}{3} \left(\Li_2\left(\tfrac{7}{8}\right) - \Li_2\left(\tfrac{1}{8}\right)\right) \Bigr).
					\end{align*}
				\end{itemize}
				
				\subsection{Refined Selmer Schemes}
				
				We have seen that, for $\mathcal{X}=\PP^1\setminus\{0,1,\infty\}$ and~$n=2$, the global Selmer scheme $\Sel_{S,2}=\AA^S\times\AA^S$ is $2|S|$-dimensional, while the local Selmer scheme $H^1_f(G_p,U^\et_2)=\AA^3$ is $3$-dimensional. So for~$|S|{}=1$, the Chabauty--Kim inequality~\eqref{eq:c-k_ineq} holds, and so the Chabauty--Kim locus~$\mathcal{X}(\ZZ_p)_{S,2}$ is finite. Using the above explicit description of the Chabauty--Kim diagram in depth~$2$, Dan-Cohen and Wewers gave an explicit description of~$\mathcal{X}(\ZZ_p)_{S,2}$ in this case: it is the vanishing locus of the Coleman function
				\[
				2\Li_2(z)-\log(z)\log(1-z)
				\]
				(independent of~$S$) \cite[\S12]{DCW}.
				
				For~$|S|{}=2$, however, the dimension inequality~\eqref{eq:c-k_ineq} fails, and the Chabauty--Kim locus~$\mathcal{X}(\ZZ_p)_{S,2}$ is not finite (at least in general). To circumvent this issue we use a certain refinement of the Chabauty--Kim method, suggested by the second author and Netan Dogra, which replaces the Selmer scheme $\Sel_{S,n}$ with a smaller \emph{refined Selmer scheme}~$\Sel_{S,n}^\min$. We recall the definition, in slightly greater generality than \cite[Definition~1.2.2]{BD19}.

				\begin{defn}\label{def:refined_selmer_scheme}
					Let~$p$ be a prime and~$n\geq0$ a non-negative integer. Let~$X/\QQ$ be a smooth hyperbolic curve. \begin{enumerate}
						\item A \emph{Selmer structure} for $X$ is a collection of sets $(\mathcal{X}_{\ell})_{\ell}$ for every prime number $\ell$, such that, for every $\ell$, $\mathcal{X}_{\ell} \subset X(\QQ_{\ell})$, and, for all but finitely many $\ell$, ~$\mathcal{X}_\ell$ is the set of $\ZZ_\ell$-integral points on the good model\footnote{By the ``good model'' of~$X$ over~$\ZZ_\ell$, we mean the~$\ZZ_\ell$-scheme $\mathcal{X}/\ZZ_\ell$ which is the complement of an \'etale divisor in a smooth proper $\ZZ_\ell$-scheme, together with an isomorphism~$\mathcal{X}_{\QQ_\ell} \cong X_{\QQ_\ell}$. The good model of~$X$, if it exists, is unique up to unique isomorphism.} of~$X$ over~$\ZZ_\ell$.
						\item If~$b$ is a $K$-rational basepoint (possibly tangential) and~$U_n^\et$ the $\QQ_p$-pro-unipotent \'etale fundamental group of~$(X,b)$, truncated in depth~$n$, then we define the \emph{refined Selmer scheme} associated to~$(X,(\mathcal{X}_\ell)_{\ell})$ to be the subscheme\footnote{There is again a small subtlety here, since the cohomology functor $H^1(G_\QQ,U_n^\et)$ is not representable. However, one can show that the refined Selmer scheme is contained inside $H^1(G_T,U_n^\et)$ where~$G_T$ is the largest quotient of~$G_\QQ$ unramified outside a sufficiently large finite set of primes~$T$ \cite[Proposition~3.2.4]{betts:effective}. This latter cohomology functor is representable by a $\QQ_p$-scheme of finite type, so it makes sense to talk of its subschemes.}
						\[
						\Sel_{\mathcal{X},n}^\min \subseteq H^1(G_\QQ,U_n^\et)
						\]
						parametrising those cohomology classes~$\xi$ whose restriction to a decomposition group~$G_\ell$ at a prime~$\ell$ lies in the Zariski-closure of the image of the local Kummer map
						\[
						j_\ell\colon \mathcal{X}_\ell \to H^1(G_\ell,U_n^\et)
						\]
						for all primes~$\ell$.
						\item We define the \emph{refined Chabauty--Kim locus} associated to~$(X,(\mathcal{X}_\ell)_{\ell})$ to be the subset
						\[
						\mathcal{X}_{p,n}^{\min} \subseteq \mathcal{X}_p
						\]
						consisting of those points $z\in\mathcal{X}_p$ such that $j_p(z)$ lies in the scheme-theoretic image of the localization map $\Sel_{\mathcal{X},n}^\min\to H^1(G_p,U_n^\et)$.
					\end{enumerate}
				\end{defn}

				We will primarily be interested in the following case. Suppose that~$S$ is a finite set of primes and that~$\mathcal{X}/\ZZ_S$ is a model of the hyperbolic curve~$X$ which is the complement of a horizontal divisor $D$ in a proper regular~$\ZZ_S$-scheme $Y$. We can then define the \emph{natural Selmer structure} $(\mathcal{X}_\ell)_{\ell}$ by
				\[
				\mathcal{X}_\ell \coloneqq 
				\begin{cases}
					\mathcal{X}(\ZZ_\ell) & \text{if $\ell\notin S$,} \\
					X(\QQ_\ell) & \text{if $\ell\in S$.}
				\end{cases}
				\]
				The resulting refined Selmer scheme is denoted by $\Sel_{S,n}^{\min}(\mathcal{X})$ (or simply~$\Sel_{S,n}^{\min}$ if~$\mathcal{X}$ is understood), and the resulting refined Chabauty--Kim locus for~$p\notin S$ by $\mathcal{X}(\ZZ_p)_{S,n}^{\min}$. It follows from the definition that one has the inclusion
				\[
				\mathcal{X}(\ZZ_S) \subseteq \mathcal{X}(\ZZ_p)_{S,n}^{\min} \subseteq \mathcal{X}(\ZZ_p) .
				\]
				If moreover $Y$ is a smooth $\ZZ_S$-scheme, $D$ is \'etale and the basepoint~$b$ is $\ZZ_S$-integral, then $\Sel_{S,n}^{\min}$ is a closed subscheme of the usual Selmer scheme $\Sel_{S,n}=H^1_{f,S}(G_\QQ,U_n^{\et})$, and hence 
				\[
				\mathcal{X}(\ZZ_S) \subseteq \mathcal{X}(\ZZ_p)_{S,n}^{\min} \subseteq \mathcal{X}(\ZZ_p)_{S,n} \subseteq \mathcal{X}(\ZZ_p) .
				\]
				
				Given a Selmer structure $(\mathcal{X}_{\ell})_{\ell}$, we stress that $\mathcal{X}_{\ell}$ does not have to contain $\mathcal{X}(\ZZ_S)$. In fact, in the case~$\mathcal X=\PP^1\setminus\{0,1,\infty\}$, we will split the points of~$\mathcal X(\ZZ_S)$ into finitely many subsets and consider appropriate Selmer structures for each one of these subsets. So we end up with a finite set of refined Selmer schemes, each one containing only some of the~$\ZZ_S$-points, and such that the union of the associated refined loci contains all of the~$\ZZ_S$-points. 
				
				\begin{rem}\label{rem:empty_locus}
					If the model~$\mathcal X$ above has $\mathcal X(\ZZ_\ell)=\emptyset$ for some~$\ell\notin S$ or $X(\QQ_\ell)=\emptyset$ for some~$\ell\in S$, then the refined Selmer scheme $\Sel_{S,n}^{\min}(\mathcal X)$ is the empty scheme, and so the refined Chabauty--Kim locus $\mathcal X(\ZZ_p)_{S,n}^{\min}$ is likewise empty for any~$n$ and any~$p\notin S$. Thus the equality
					\[
					\mathcal X(\ZZ_S) = \mathcal X(\ZZ_p)_{S,n}^{\min}
					\]
					holds automatically in such cases, since both sides are the empty set. In the particular case that $\mathcal X=\PP^1_{\ZZ_S}\setminus\{0,1,\infty\}$, this is saying that the refined version of Kim's Conjecture holds automatically whenever $S\not\ni2$, as we asserted in the introduction.
					
					We remark that something similar holds for the Selmer scheme as defined in \cite[\S8.1]{BDCKW}: it is empty whenever $\mathcal X(\ZZ_\ell)=\emptyset$ for some $\ell\notin S\cup\{p\}$. In particular, the unrefined version of Kim's Conjecture in \cite[Conjecture~3.1 \& \S8.1]{BDCKW} holds automatically for $\mathcal X=\PP^1_{\ZZ_S}\setminus\{0,1,\infty\}$ whenever $S\not\ni2$.
				\end{rem}

				In the particular case that~$\mathcal X=\PP^1\setminus\{0,1,\infty\}$, $n \leq 2$ and~$p\notin S$, this can all be made very explicit. Since $H^1(G_\ell,\QQ_p(1))=\QQ_p$ by Kummer theory and $H^1(G_{\ell}, \QQ_p(2)) = 0$, we have $H^1(G_\ell,U^\et_n)=\AA^2$ for $\ell \in S$ and $n \leq 2$, and the local Kummer map $j_\ell\colon\mathcal{X}(\QQ_\ell)\to\QQ_p^2$ is given by $z\mapsto(v_\ell(z),v_\ell(1-z))$. So the Zariski-closure of the image is as follows.
				
				\begin{lemma}\label{lem:three}
					The Zariski-closure of $j_\ell(\mathcal{X}(\QQ_\ell))$ in $\AA^2$ is the union of the three lines $x=0$, $y=0$ and $x=y$.
					\begin{proof}
						For~$z\in\mathcal{X}(\QQ_\ell)$, the ultrametric triangle inequality gives that
						\[
						\min\{v_\ell(z),v_\ell(1-z)\}\leq0
						\]
						with equality if $v_\ell(z)\neq v_\ell(1-z)$. So we have either $v_\ell(1-z)=0$ or $v_\ell(z)=0$ or $v_\ell(z)=v_\ell(1-z)$, i.e.\ $j_\ell(z)=(v_\ell(z),v_\ell(1-z))$ lies on one of the three lines mentioned. It is easy to see that the image is Zariski-dense in this union, e.g.\ the points $j_\ell(\ell^m)=(m,0)$ for~$m\geq1$ are Zariski-dense in the line~$y=0$.
					\end{proof}
				\end{lemma}
				
				\begin{rem}
					In Lemma~\ref{lem:three} we see a shadow of the tropicalization (or Berkovich analytification) of the thrice-punctured line~$\dX$. The image of~$j_\ell$ is contained in~$\QQ^2$, and the closure of~$j_\ell(\dX(\Qbar_\ell))$ inside~$\RR^2$ is the tropicalization of $\dX$ by Kapranov's Theorem \cite[Theorem~3.1.3]{maclagan-sturmfels:tropical_geometry} (viewing~$\dX$ as the toric hypersurface in~$\mathbb{G}_m^2$ cut out by the equation $z_1+z_2=1$). This tropicalization is the tropical line, i.e.\ the union of three rays through the origin as shown below, and these three rays correspond to the three irreducible components of the Zariski-closure of~$j_\ell(\dX(\QQ_\ell))$ discussed in Lemma~\ref{lem:three}.

				\end{rem}
				
				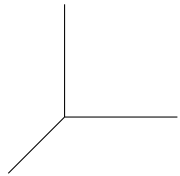
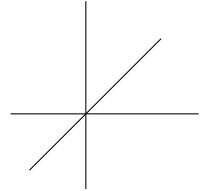
\begin{figure}[!h]
					\centering
					\begin{subfigure}[b]{0.4\textwidth}
						\centering
						\begin{tikzpicture}
							\draw (0,0) -- (1.5,0);
							\draw (0,0) -- (0,1.5);
							\draw (0,0) -- (-0.75,-0.75);
							\draw[white] (-0.75,-0.75) -- (-1.0,-1.0);
						\end{tikzpicture}
						\caption{The tropicalization of the thrice-punctured line, inside~$\RR^2$.}
					\end{subfigure}
					\hfill
					\begin{subfigure}[b]{0.4\textwidth}
						\centering
						\begin{tikzpicture}
							\draw (-1.,0) -- (1.5,0);
							\draw (0,-1.0) -- (0,1.5);
							\draw (1.0,1.0) -- (-0.75,-0.75);
						\end{tikzpicture}
						\caption{The Zariski-closure of the image of~$j_\ell$ inside~$\AA^2_{\QQ_p}$.}
					\end{subfigure}
					\caption{The three components of the Zariski-closure of the image of~$j_\ell$ correspond to the rays of the tropicalization.}
				\end{figure}
				
				Using this, we see that the refined Selmer scheme $\Sel_{S,n}^\min$ is either empty or the union of $3^{|S|}$ subspaces. Specifically, let us define
				\begin{equation}
					\label{eq:refinedconditions}
					p_{\diagup}(x,y) = x-y, \quad p_|(x,y)=x, \quad p_{-}(x,y)=y,
				\end{equation}
				so that the locus $p_i(x,y)=0$ defines a line in $\AA^2$ (the subscripts $\diagup,|,-$ depict the direction of the corresponding line). Then we have the following.
				
				\begin{lemma}\label{lem:nine}
					Let $n \leq 2$. If~$2\notin S$ then the refined Selmer scheme $\Sel_{S,n}^\min\subseteq\Sel_{S,n}=\AA^S\times\AA^S$ is empty. Otherwise, it is equal to the union of the subspaces
					\[\Sel^{\Sigma}_{S,n} = \{ ((x_{\ell})_{\ell \in S}, (y_{\ell})_{\ell \in S}) : p_{i_{\ell}}(x_{\ell}, y_{\ell}) = 0 \; \forall \ell \in S\} \subseteq \mathbb A^{S} \times \mathbb A^{S}\]
					for the $3^{\lvert S\rvert }$ choices of tuples of conditions
					\[\Sigma = (i_{\ell})_{\ell \in S} \in\{\diagup, |,-\}^{ S}\text.\]
					Each of these subspaces is $| S|$-dimensional.
				\end{lemma}
				
				
				
				The decomposition of the refined Selmer scheme~$\Sel_{S,n}^\min$ into the subschemes $\Sel_{S,n}^\Sigma$ in Lemma~\ref{lem:nine} induces a corresponding decomposition of the refined Chabauty--Kim locus~$\mathcal{X}(\ZZ_p)_{S,n}^\min$.
				
				\begin{defn}
					Let $n \leq 2$ and let $\Sigma = (i_\ell)_{\ell \in S} \in \{\diagup,|,-\}^S$ be a choice of refinement conditions for each $\ell \in S$. Denote by
					\[
					\mathcal X(\ZZ_p)_{S,n}^\Sigma \subseteq \mathcal X(\ZZ_p)
					\]
					the set of points $z\in\mathcal{X}(\ZZ_p)_{S,n}$ such that~$j_p(z)$ lies in the scheme-theoretic image of the localization map $\loc_p\colon\Sel_{S,n}^\Sigma\to H^1(G_p,U_n^\et)$. It follows from Lemma~\ref{lem:nine} that the refined Chabauty--Kim locus $\mathcal{X}(\ZZ_p)_{S,n}^\min$ is empty if~$2\notin S$, and otherwise admits a decomposition
					\[
					\mathcal X(\ZZ_p)_{S,n}^{\min} = \bigcup_{\Sigma}\mathcal X(\ZZ_p)_{S,n}^{\Sigma}.
					\]
				\end{defn}

				\begin{rem}
					\label{rem: refined subsets}
					The scheme $\Sel_{S,n}^\Sigma$ and set $\mathcal{X}(\ZZ_p)_{S,n}^\Sigma$ are the refined Selmer scheme and refined Chabauty--Kim locus corresponding to the Selmer structure
					\[
					\mathcal{X}_\ell^{\Sigma} \coloneqq 
					\begin{cases}
						\mathcal{X}(\ZZ_\ell) & \text{if $\ell\notin S$,} \\
						\{z\in X(\QQ_\ell) \::\: z\not\equiv0,1 \text{ mod }\ell\} & \text{if $\ell\in S$ and $i_\ell=\diagup$,} \\
						\{z\in X(\QQ_\ell) \::\: z\not\equiv0,\infty \text{ mod }\ell\} & \text{if $\ell\in S$ and $i_\ell={|}$,} \\
						\{z\in X(\QQ_\ell) \::\: z\not\equiv1,\infty \text{ mod }\ell\} & \text{if $\ell\in S$ and $i_\ell={-}$.}
					\end{cases}
					\]
				\end{rem}


				
				\subsection{The $S_3$-action}
				
				It turns out that when trying to compute the refined Chabauty--Kim locus~$\mathcal{X}(\ZZ_p)_{S,n}^\min$, it is more efficient to compute the sets $\mathcal{X}(\ZZ_p)_{S,n}^\Sigma$ separately and then take the union over~$\Sigma$. \emph{A priori} this involves computing $3^{|S|}$ different sets, but we will leverage the natural action of~$S_3$ on~$\mathcal{X}$ to reduce this number significantly. We start by proving a general functoriality statement for refined Chabauty--Kim loci, building on the corresponding statement for unrefined Chabauty--Kim loci \cite[\S2.9]{BDCKW}.

				\begin{prop}[Functoriality of refined Chabauty--Kim loci]
					\label{thm: functoriality}
					Let~$p$ be a prime and~$n\geq 1$ a positive integer.
					\begin{enumerate}
						\item \label{item: base point independence of refined CK locus}
						Let~$X/\QQ$ be a smooth hyperbolic curve and let $(\mathcal{X}_{\ell})_{\ell}$ be a Selmer structure. Then the refined Chabauty--Kim locus $\mathcal{X}_{p,n}^\min$ is independent of the choice of basepoint~$b$.
						\item \label{item: functoriality of CK locus}
						Let~$Y/\QQ$ be another smooth hyperbolic curve with a Selmer structure $(\mathcal{Y}_{\ell})_{\ell}$. Suppose that~$f\colon X\to Y$ is a morphism of $\QQ$-varieties such that $f(\mathcal{X}_\ell)\subseteq \mathcal{Y}_\ell$ for all~$\ell$. Then
						\[
						f(\mathcal{X}_{p,n}^\min) \subseteq \mathcal{Y}_{p,n}^\min \,.
						\]
					\end{enumerate}
				\end{prop}
				
				\begin{proof}
					For the purpose of this proof, let us denote by $U^{\et}(b)$ the $\QQ_p$-pro-unipotent étale fundamental group of $X$ with base point $b$, and by $U_n^{\et}(b)$ its quotient of unipotency depth~$n$. Suppose $c$ is a second basepoint. We claim to have a diagram for all primes $\ell$ as follows:
					\[
					\begin{tikzcd}
						& & \mathcal{X}_{\ell} \dlar["j_{\ell}^{(b)}"] \arrow[ddl, bend left, "j_{\ell}^{(c)}"]\\
						H^1(G_{\QQ}, U_n^{\et}(b)) \rar["\loc_{\ell}"] \dar["\wr"] & H^1(G_{\ell}, U_n^{\et}(b)) \dar["\wr"] \\
						H^1(G_{\QQ}, U_n^{\et}(c)) \rar["\loc_{\ell}"] & H^1(G_{\ell}, U_n^{\et}(c)).
					\end{tikzcd}
					\]
					Here, $j_{\ell}^{(b)}$ denotes the local Kummer map which maps $x \in \mathcal{X}_{\ell}$ to the class representing the torsor of paths from $b$ to $x$. For the full fundamental group (``$n = \infty$''), it is shown in \cite[§2.9]{BDCKW} that there are canonical isomorphisms $H^1(G, U^{\et}(b)) \cong H^1(G, U^{\et}(c))$ for $G = G_{\QQ}$ and $G = G_{\ell}$, compatible with the localization and the Kummer maps. In terms of $G$-equivariant $U^{\et}(b)$-torsors and $U^{\et}(c)$-torsors, the isomorphisms are given by twisting with the $\QQ_p$-pro-unipotent étale path space $P^{\et}(c,b)$, which is a $G$-equivariant $U^{\et}(b)$-$U^{\et}(c)$-bitorsor\footnote{The authors of loc.\ cit. write $P^{\et}(b,c)$, not $P^{\et}(c,b)$, which seems to be a mistake.}. 
					The finite depth variant is given by twisting with the corresponding quotient $P^{\et}_n(c,b)$. 
					
					Given the diagram, it follows from the definitions that the change of basepoint isomorphism on the left maps the refined Selmer scheme $\Sel_{S,n}^{\min}(\mathcal{X})$ at~$b$ isomorphically to the one at~$c$. It then follows that both result in the same refined Chabauty--Kim locus, proving~\ref{item: base point independence of refined CK locus}.
					
					To show~\ref{item: functoriality of CK locus}, denote by $U_n^{\et}$ and $V_n^{\et}$ the $\QQ_p$-pro-unipotent étale fundamental groups of $(X,b)$ and $(Y, f(b))$, respectively, truncated in unipotency depth~$n$. Thus, the choice of basepoint $f(b)$ on $Y$ depends on the choice of basepoint $b$ on $X$. By~\ref{item: base point independence of refined CK locus}, the refined Chabauty--Kim locus is not affected by this choice. The map $f\colon X \to Y$ of $\QQ$-varieties induces a $G_{\QQ}$-equivariant homomorphism $f_*\colon U_n^{\et} \to V_n^{\et}$ of fundamental groups, which in turn induces maps $f_*\colon H^1(G, U_n^{\et}) \to H^1(G, V_n^{\et})$ on non-abelian cohomology functors for $G = G_{\QQ}$ and $G_{\ell}$ for all primes~$\ell$. These fit into a commutative diagram as follows,
					\[
					\begin{tikzcd}
						& & \mathcal{X}_{\ell} \dlar["j_{\ell}^X"] \dar["f"] \\
						H^1(G_{\QQ}, U_n^{\et}) \rar["\loc_{\ell}"] \dar["f_*"] & H^1(G_{\ell}, U_n^{\et}) \dar["f_*"] & \mathcal{Y}_{\ell} \dlar["j_{\ell}^Y"] \\
						H^1(G_{\QQ}, V_n^{\et}) \rar["\loc_{\ell}"] & H^1(G_{\ell}, V_n^{\et}),
					\end{tikzcd}
					\]
					with $j_{\ell}^X$ and $j_{\ell}^Y$ denoting the respective local Kummer maps. Using the diagram, it again follows from the definitions that the vertical map on the left maps the refined Selmer scheme $\Sel_{S,n}^{\min}(\mathcal{X})$ of $X$ into the refined Selmer scheme $\Sel_{S,n}^{\min}(\mathcal{Y})$ of $Y$, and this implies $f(\mathcal{X}_{p,n}^{\min}) \subseteq \mathcal{Y}_{p,n}^{\min}$.
				\end{proof}
				
				Let us apply this to the automorphisms of the thrice-punctured line given by the natural $S_3$-action. The action is generated by the two automorphisms $z \mapsto 1-z$ and $z \mapsto 1/z$ which permute the three cusps $\{0,1,\infty\}$. The full group is given by the following rational functions:
				\[ z, \; 1-z,\; \frac{1}{z},\; \frac{z-1}{z},\; \frac{z}{z-1},\; \frac{1}{1-z}. \]
				
				There is also a natural action of $S_3$ on $\{\diagup, |, -\}$: $z \mapsto 1-z$ interchanges $| \leftrightarrow -$ and $z \mapsto 1/z$ interchanges $\diagup \leftrightarrow -$. This action gives an isomorphism between $S_3$ and the group of self-permutations of $\{\diagup, |, -\}$ (e.g.\ because the image contains two transpositions), in particular it is $3$-transitive.

				\begin{cor}
					\label{cor: S3-action on CK locus}
					Assume $2 \in S$, let $\mathcal X = \PP^1_{\ZZ_S} \setminus \{0,1,\infty\}$ and let $p \not\in S$. 
					\begin{enumerate}
						\item \label{item: locus S3-stable}
						The refined Chabauty--Kim locus $\mathcal{X}(\ZZ_p)_{S,n}^{\min}$ for the natural Selmer structure is stable under the $S_3$-action for any $n \geq 1$. 
						\item \label{item: S3 permutes refined loci}
						For $n\leq 2$, the subsets $\mathcal X(\ZZ_p)_{S,n}^{\Sigma}$ in the union $\mathcal{X}(\ZZ_p)_{S,n}^{\min} = \bigcup_{\Sigma}\mathcal X(\ZZ_p)_{S,n}^{\Sigma}$, with $\Sigma \in \{\diagup, |,-\}^S$ running through tuples of refinement conditions, are permuted by the $S_3$-action, in the sense that 
						\[ \sigma(\mathcal X(\ZZ_p)_{S,n}^{\Sigma}) = \mathcal X(\ZZ_p)_{S,n}^{\sigma(\Sigma)}. \]
					\end{enumerate}
				\end{cor}

				\begin{proof}
						Part~\ref{item: locus S3-stable} follows from Proposition~\ref{thm: functoriality}\,\ref{item: functoriality of CK locus} with the natural Selmer structure defined earlier, 
						since these sets are preserved by the $S_3$-action. Given a choice of refinement conditions $\Sigma = (i_{\ell})_{\ell \in S} \in \{\diagup, |,-\}^S$ and defining $\mathcal X_{\ell}^\Sigma$ as in Remark~\ref{rem: refined subsets}, we have $\sigma(\mathcal X_{\ell}^\Sigma) = \mathcal X_{\ell}^{\sigma(\Sigma)}$ for all primes $\ell$. This implies~\ref{item: S3 permutes refined loci} again by Proposition~\ref{thm: functoriality}\,\ref{item: functoriality of CK locus}.
					\end{proof}
					

					\section{Explicit Equations}
					\label{explicit equations}
					
					The refined Selmer scheme~$\Sel_{S,2}^\min$ for $\mathcal{X}=\PP^1\setminus\{0,1,\infty\}$ is either empty or $|S|$-dimensional by Lemma!\ref{lem:nine}, so the localization map $\Sel_{S,2}^\min\to H^1_f(G_p,U^\et_2)$ has non-dense image and the refined Chabauty--Kim locus $\mathcal{X}(\ZZ_p)_{S,2}^\min$ is finite as soon as $|S|{}\leq2$. Using the explicit descriptions of the Chabauty--Kim diagram from Section~\ref{subsec: explicit_c-k_depth_2}--\ref{subsec:locdepth2}, we will compute explicit equations for the refined locus in this case.
					
					Since $\mathcal{X}(\ZZ_p)_{S,n}$ is empty whenever $2\notin S$, we will restrict attention in this section to sets~$S$ containing~$2$.
					
					
					\subsection{The case $S = \{2\}$}
					
					Assume that $S=\{2\}$ and fix a prime $p \neq 2$. The thrice-punctured line has the $S$-integral points $2,-1,\frac12$. Since $S_3$ acts transitively on the set $\{\diagup, |, -\}$ it suffices to consider a single refined Selmer condition, say $|$. We start by working in depth $n = 1$. We have
					\[ \Sel_{\{2\},1} \cong \AA^2 \]
					with localization map $\loc_p: \Sel_{\{2\},1} \to \AA^2$ given by
					\[ \loc_p(x_2, y_2) = \begin{pmatrix} \log(2) x_2\\
						\log(2) y_2
					\end{pmatrix}.\]
					Since the image is Zariski dense, the unrefined Chabauty--Kim method does not apply. However, the refined Selmer subspace $\Sel_{\{2\},1}^{|} \subseteq \Sel_{\{2\},1}$, which as in \eqref{eq:refinedconditions} is cut out by the equation $x_{2} = 0$, is one-dimensional,
					and the localization map restricts as
					\[ \loc_p(0,y_2) = \begin{pmatrix} 0\\
						\log(2) y_2
					\end{pmatrix}.\]
					In the coordinates $u,v$, the image is cut out by the equation $u = 0$, which becomes
					\[ \log(z) = 0 \]
					after pulling back along $j_{\dR}$. We conclude:
					
					\begin{prop}\label{prop:depth_1}
						The refined Chabauty--Kim method in depth~$1$ with any odd prime~$p$ shows the finiteness of $\mathcal X(\ZZ[1/2])$. More precisely, the refined set $\mathcal X(\ZZ_p)_{\{2\},1}^{\min}$ consists of the nontrivial $(p-1)$-st roots of unity, along with their orbits under the $S_3$-action. \qed
					\end{prop}
					
					
					Choosing $p=3$, we obtain the following:
					
					\begin{cor}
						\label{refined CK conjecture in depth 1}
						We have $\mathcal{X}(\ZZ_3)^{\min}_{\{2\},1} = \{-1,2,1/2\}$, i.e.\ the refined Chabauty--Kim conjecture holds in depth~1 for $S = \{2\}$ and $p = 3$.
					\end{cor}
					
					\begin{rem}
						As discussed above, the original formulation of Kim's Conjecture does not hold in depth~$1$ in this case: one needs to either decrease the size of $S$, as in \cite[§6]{BDCKW}, or go to higher depth, as in \cite[§12]{DCW}.
					\end{rem}
					
					\begin{rem}
						As was pointed out to us by the referee, the unrefined Chabauty--Kim method in depth~$n=1$ for $\PP^1\setminus\{0,1,\infty\}$ is closely related to Skolem's $p$-adic method for solving Thue equations. That is, in the case $S=\{2\}$, we are trying to solve the exponential Diophantine equation
						\begin{equation}\label{eq:exponential_diophantine}
							\pm 2^a\pm 2^b = 1
						\end{equation}
						for $a,b\in\ZZ$. Skolem's method amounts to first solving this equation $p$-adically (for some odd~$p$), observing that we can write
						\[
						\pm2^a = \zeta\cdot\exp(a\log(2)) \quad\text{and}\quad \pm2^b = \zeta\cdot\exp(b\log(2))
						\]
						for some roots of unity~$\zeta,\eta$ in~$\ZZ_p$, where~$\log$ and~$\exp$ denote the usual $p$-adic power series. So solutions to~\eqref{eq:exponential_diophantine} in~$\ZZ$ give rise to solutions of
						\begin{equation}\label{eq:skolem}
							\zeta\cdot\exp(a\log(2)) + \eta\cdot\exp(b\log(2)) = 1
						\end{equation}
						over~$\ZZ_p$. One might hope to solve this latter equation $p$-adically, and thereby derive constraints on the solutions to~\eqref{eq:exponential_diophantine}. In this case, though, there are uncountably many $p$-adic solutions to~\eqref{eq:skolem} and so this unrefined Skolem's method fails to tell us much about the solutions to~\eqref{eq:exponential_diophantine}.
						
						However, once one makes the observation that either $a=0$ or $b=0$ or $a=b$ (from considering~\eqref{eq:exponential_diophantine} $2$-adically), then we are reduced to solving three one-variable exponential Diophantine equations, for example the equation
						\[
						\pm 1 \pm 2^b = 1 \,.
						\]
						Of course, solving this equation is rather trivial, but one can still apply Skolem's method. One ends up wanting to solve the equation
						\[
						\zeta + \eta\cdot\exp(b\log(2)) = 1
						\]
						for $\zeta,\eta$ roots of unity in~$\ZZ_p$ and~$b\in\ZZ_p$. This latter equation has no solution if~$\zeta=1$, and all other values of~$\zeta$ lead to a unique solution $(\eta,b)$. So the constraints on~$\mathcal X(\ZZ[1/2])$ coming from Skolem's method are exactly those stated in Proposition~\ref{prop:depth_1}.
						
						We remark that the application of Chabauty--Kim to the study of more general kinds of exponential Diophantine equations was already investigated in \cite[\S12.2]{DCW}.
					\end{rem}
					
					
					
					
					Now we carry out refined Chabauty--Kim in depth $n = 2$. We have $\Sel_{\{2\},2} \cong \Sel_{\{2\},1} \cong \AA^2$ and the localization map $\loc_p: \Sel_{\{2\},2} \to \AA^3$ is given by
					\[ \loc_p(x_2, y_2) = \begin{pmatrix} \log(2) x_2\\
						\log(2) y_2\\
						\tfrac12 \log(2)^2 x_2 y_2
					\end{pmatrix},\]
					using the twisted antisymmetry relation for $a_{22}=\frac12\log(2)^2$. On the refined Selmer subspace $\Sel_{\{2\},2}^{|}$, the localization map restricts as
					\[\loc_p(0, y_2) = \begin{pmatrix}
						0 \\
						\log(2) y_2\\
						0
					\end{pmatrix},\]
					so that the set $\mathcal X(\ZZ_p)_{\{2\},2}^{|}$ is cut out by the two equations
					\[ \log(z) = 0, \quad \Li_2(z) = 0. \]
					This shows:
					
					\begin{prop}[= Theorem~\ref{thm:main_s=1}]
						\label{prop:refinedCKset_card=1}
						Let $p \neq 2$ be prime. The refined Chabauty--Kim set $\mathcal X(\ZZ_p)^{\min}_{\{2\},2}$ in depth $2$ consists of the nontrivial $(p-1)$-st roots of unity $\zeta \in \ZZ_p$ for which $\Li_2(\zeta) = 0$, along with their $S_3$-orbits. \qed
					\end{prop}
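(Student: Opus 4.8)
The plan is to specialize the explicit depth-$2$ picture assembled in the previous sections to $S=\{\ell\}$ and simply read off the vanishing locus on the refined side; since the substantive input (bilinearity of the third component of $\loc_p$, the twisted anti-symmetry relation, and the $S_3$-equivariance) has already been established, this amounts to a short computation rather than a new argument. The first step is the $S_3$-reduction: by Proposition~\ref{S_3-stability} and the corollary following it, and because $S_3$ acts transitively on the three refinement conditions $\{\diagup,|,-\}$, the set $\mathcal X(\ZZ_p)^{\min}_{\{\ell\},2}$ is the $S_3$-orbit of $\mathcal X(\ZZ_p)^{|}_{\{\ell\},2}$, so it suffices to describe the latter.

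Next I would write down the localization map restricted to the refined subspace. Via $\pi_*\colon\Sel_{\{\ell\},2}\xrightarrow{\sim}\Sel_{\{\ell\},1}\cong\AA^2$ with coordinates $(x_\ell,y_\ell)$, the map $\loc_p$ has first two components $\log(\ell)x_\ell$ and $\log(\ell)y_\ell$, and its third component is the bilinear form $\sum a_{\ell,q}x_\ell y_q$, which here is the single diagonal term $a_{\ell,\ell}x_\ell y_\ell=\tfrac12\log(\ell)^2 x_\ell y_\ell$ by Proposition~\ref{twisted antisymmetry relations}. Restricting to $\Sel^{|}_{\{\ell\},2}=\{x_\ell=0\}$ kills the first and third components, so $\loc_p(0,y_\ell)=(0,\log(\ell)y_\ell,0)$ in the coordinates $(u,v,w)$ on $U_2^{\dR}$. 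Since $\ell$ is a rational prime different from $p$, it is a $p$-adic unit which is not a root of unity, hence $\log(\ell)\neq 0$ (injectivity of $\log_p$ on $1+p\ZZ_p$ for $p$ odd), so the image is exactly the coordinate line $\{u=w=0\}$ and its vanishing ideal is $\mathcal I^{|}_{\{\ell\},2}=(u,w)$.

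Finally I would pull back along $j_{\dR}$. In the $(u,v,w)$-coordinates one has $j_{\dR}(z)=(\log z,\log(1-z),-\Li_2(z))$, so $j_{\dR}^*(u)=\log z$ and $j_{\dR}^*(w)=-\Li_2(z)$; therefore $\mathcal X(\ZZ_p)^{|}_{\{\ell\},2}$ is the common zero locus in $\mathcal X(\ZZ_p)$ of $\log z$ and $\Li_2(z)$. For $z\in\mathcal X(\ZZ_p)$ we have $z\in\ZZ_p^\times$, and $\log_p z=0$ exactly when $z$ is a root of unity in $\ZZ_p$; for $p$ odd these are the $(p-1)$-st roots of unity, with $z=1$ excluded as a puncture. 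Adding the condition $\Li_2(z)=0$ and taking the $S_3$-orbit yields the claim.

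The only genuine obstacles are already discharged by the earlier sections: bilinearity of $h_3$, the value $a_{\ell,\ell}=\tfrac12\log(\ell)^2$, and the $S_3$-equivariance powering the reduction. Beyond that, the one point needing a moment's care is the elementary observation $\log(\ell)\neq 0$, which is what makes the refined local image the \emph{full} coordinate line $(u,w)=(0,0)$; and the hypothesis $p\neq 2$ enters only to make ``$(p-1)$-st root of unity'' meaningful and to use the clean structure of $\ZZ_p^\times$ (for $p=2$ the statement is anyway vacuous, as $\mathcal X(\ZZ_2)=\emptyset$).
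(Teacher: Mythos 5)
Your proposal is correct and follows essentially the same route as the paper: reduce to the single refinement condition $|$ via the transitive $S_3$-action, restrict the explicit localization map $(x_\ell,y_\ell)\mapsto(\log(\ell)x_\ell,\log(\ell)y_\ell,\tfrac12\log(\ell)^2x_\ell y_\ell)$ to $\{x_\ell=0\}$, and pull back the resulting ideal $(u,w)$ along $j_{\dR}$ to obtain $\log(z)=\Li_2(z)=0$. Your explicit remark that $\log(\ell)\neq0$ (so the image is the full line $\{u=w=0\}$ and not a smaller set) is a small point the paper leaves implicit, but it is the same argument.
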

					
					\begin{rem}\label{rem:one_prime_checked}
						The refined version of Kim's Conjecture for~$\mathcal X=\PP^1_{\ZZ}\setminus\{0,1,\infty\}$ and $S=\{2\}$ in depth~$n=2$ is equivalent to the assertion that the only nontrivial $(p-1)$-st root of unity~$\zeta\in\ZZ_p$ for which~$\Li_2(\zeta)=0$ is $\zeta=-1$. For any fixed~$p$, this can be checked on a computer: work of Besser--de Jeu allows one to compute the values~$\Li_2(\zeta)$ for roots of unity~$\zeta\notin\{\pm1\}$ up to any desired $p$-adic precision, and thereby verify that~$\Li_2(\zeta)\neq0$ for these~$\zeta$.
						
						In practice, verifying the refined version of Kim's Conjecture this way is rather slow, since it requires computing $O(p)$ values of the $p$-adic dilogarithm. One can speed up the computation using work of Besser \cite[Prop.\ 2.1, 2.2]{besser_finite_2002} shows that for any $(p-1)$-st root of unity $\zeta\ne 1$ in $\QQ_p$ we have 
						$$ \frac{p^2-1}{p^2}\Li_2(\zeta) \in \ZZ_p$$
						and that this is congruent mod $p$ to 
						$$(1-\zeta^p)^{-1} \li_2(\zeta)$$
						where $\li_2(z)$ is a \emph{finite polylogarithm} function
						\begin{align*}
							\li_n \colon \FF_p &\to \FF_p       \\
							z &\mapsto \sum_{k=1}^{p-1} \frac{z^k}{k^n}\text.
						\end{align*}
						Hence a sufficient condition for $\Li_2(\zeta)$ to be non-zero is that the finite polylogarithm $\li_2(\zeta)$ is non-zero in~$\FF_p$. This gives a much quicker way to verify the refined version of Kim's Conjecture in depth~$2$. One runs through the roots of unity in~$\FF_p$ other than~$\pm1$ (i.e.\ the elements $2,3,\dots,p-2$), checking whether~$\li_2(\zeta)=0$ or not. If it is non-zero, then certainly $\Li_2(\zeta)\neq0$, and for the remaining roots of unity~$\zeta$ one falls back to computing~$\Li_2(\zeta)$ $p$-adically using~\cite{besser_lip_2008} -- heuristically, this should only happen for $O(1)$ values of~$\zeta$. Using this approach, we have verified that the refined version of Kim's Conjecture holds in depth~$n=2$ for all~$3\leq p\leq10^5$.
						
						Pushing this computation further would be possible but this procedure seems to take $O(p^3)$ time to run in practice.
						The only points in this computation where computing $\Li_2(\zeta)$ to more more than 4 digits of $p$-adic precision was necessary were for $p=1093, 3511$ (the known Wieferich primes) and $\zeta$ the root of unity reducing to $2$.
					\end{rem}
					
					
					\begin{rem}
						Already the unrefined Chabauty--Kim method in depth 2 proves the finiteness of $\{2\}$-integral points of $\PP^1 \setminus \{0,1,\infty\}$, since the image of $\loc_p$ is a two-dimensional subspace in $\AA^3$. As in ~\cite[\S 12]{DCW} , the set $\mathcal X(\ZZ_p)_{\{2\},2}$ is the vanishing locus of the function
						\[ 2 \Li_2(z) - \log(z) \log(1-z). \]
						This cuts out precisely the set $\{2,-1,\frac12\}$ of $\{2\}$-integral points for $p = 3,5,7$, but for $p = 11$ one gets additionally the $S_3$-orbit of the point $\frac12 (1 \pm \sqrt5)$. The refined Chabauty--Kim method is able to rule out this point already in depth one. 
					\end{rem}
					
					\subsection{Sets $S$ of size two}
					
					
					Assume now that $S = \{2,q\}$ for some odd prime~$q$. Then each refined Selmer scheme $\Sel_{S,2}^{i,j}$ has dimension $2$, hence the image in $\AA^3$ under $\loc_p$ is non-dense, so that the corresponding vanishing ideal $\mathcal I^{i,j}_{S,2} \neq 0$. As before, we do not need to determine the vanishing ideal for all nine possible refining Selmer conditions $(i,j)\in\{\diagup, |, -\}^2$. The action of~$S_3$ on $\{\diagup, |, -\}$ is $2$-transitive, so there are exactly two $S_3$-orbits in $\{\diagup, |, -\}^2$: the orbit of $(|,|)$ and the orbit of $(|,-)$. So by Corollary~\ref{cor: S3-action on CK locus}\ref{item: S3 permutes refined loci}, $\mathcal X(\ZZ_p)_{S,2}^{\min}$ is the union of the $S_3$-translates of the two refined loci $\mathcal X(\ZZ_p)_{S,2}^{|,|}$ and $\mathcal X(\ZZ_p)_{S,2}^{|,-}$, which we now compute.
					
					For this, recall from \S\ref{subsec:locdepth2} that the localization map with respect to the coordinates $x = (x_2, x_q)$, $y = (y_2, y_q)$ on $\Sel_{S,2} = \AA^2 \times \AA^2$ has the form
					\[ \loc_p(x,y) = \begin{pmatrix}
						\log(\ell) x_2 + \log(q) x_q\\
						\log(\ell) y_2 + \log(q) y_q\\
						\tfrac12 \log(2)^2 x_2 y_2 + a_{2,q} x_2 y_q + a_{q,2} x_q y_2 + \tfrac12 \log(q)^2 x_q y_q
					\end{pmatrix}
					\]
					where $a_{2,q}$, $a_{q,2} \in \QQ_p$ are the coefficients of the pairing~$h_3$.
					
					We first determine the equations for $\mathcal X(\ZZ_p)_{S,2}^{|,|}$.  
					The restriction of $\loc_p$ to the subspace $\Sel_{S,2}^{|,|}$ is given by
					\[ 
					\loc_p(0, 0, y_2, y_q) = \begin{pmatrix}
						0\\
						\log(2) y_2 + \log(q) y_q \\
						0
					\end{pmatrix}.
					\]
					In the coordinates $u,v,w$ on $\AA^3$, the image of $\Sel_{S,2}^{|,|}$ is therefore cut out by the two equations
					\[ u = 0, \quad w = 0. \]
					Pulling back these equations along $j_{\dR}$, we obtain the following:

					\begin{prop}
						\label{thm: equations |,|}
						The set $\mathcal X(\ZZ_p)_{\{2,q\},2}^{|,|}$ is cut out in $\mathcal X(\ZZ_p)$ by the two equations
						\begin{equation}
							\label{subspace equation |,|}
							\log(z) = 0, \quad \Li_2(z) = 0.
						\end{equation}
					\end{prop}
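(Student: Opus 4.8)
The statement is essentially immediate once the definitions are unwound, so the plan is mainly bookkeeping, carried out in three steps.

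First I would identify the subspace $\Sel_{S,2}^{|,|}$ explicitly. By Lemma~\ref{lem:nine} together with the definition of the refinement condition $p_{|}(x,y)\colon x = 0$, the refined Selmer scheme $\Sel_{S,2}^{|,|} \subseteq \AA^S \times \AA^S$ (for $S = \{\ell, q\}$) is the coordinate plane $\{x_\ell = x_q = 0\}$, parametrised by $(y_\ell, y_q)$.

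Next I would restrict the localization map. Substituting $x_\ell = x_q = 0$ into the coordinate formula for $\loc_p$ on $\Sel_{S,2}$ displayed just above the Proposition, the first component $\log(\ell)x_\ell + \log(q)x_q$ vanishes identically, and the whole bilinear third component vanishes as well, since each of its monomials contains a factor $x_\ell$ or $x_q$. Thus $\loc_p(0,0,y_\ell,y_q) = (0,\ \log(\ell)y_\ell + \log(q)y_q,\ 0)$, so the image of $\Sel_{S,2}^{|,|}$ in $U_2^{\dR} = \AA^3$ is contained in the line $\{u = 0,\ w = 0\}$ in the coordinates $(u,v,w)$. To see that the defining ideal $\mathcal I^{|,|}_{S,2}$ of this image equals \emph{exactly} $(u,w)$ — which is what makes the two equations sharp rather than merely sufficient — it is enough to note that $(y_\ell, y_q) \mapsto \log(\ell)y_\ell + \log(q)y_q$ surjects onto $\AA^1$; this holds because $\log(\ell) \neq 0$ for any rational prime $\ell \neq p$, as such an $\ell$ is not a root of unity in $\ZZ_p^\times$ and hence has nonzero $p$-adic logarithm.

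Finally I would pull back along the de Rham Kummer map. Since $j_{\dR}$ is given in the coordinates $(u,v,w)$ by $z \mapsto (\log(z),\ \log(1-z),\ -\Li_2(z))$, we get $j_{\dR}^*(u) = \log(z)$ and $j_{\dR}^*(w) = -\Li_2(z)$, so $j_{\dR}^*(\mathcal I^{|,|}_{S,2}) = (\log(z),\ \Li_2(z))$; by the definition of $\mathcal X(\ZZ_p)_{S,2}^{|,|}$ as the vanishing locus of this pulled-back ideal, the set is cut out by $\log(z) = 0$ and $\Li_2(z) = 0$, as claimed. I do not expect a genuine obstacle here: the only point deserving a moment's care is the surjectivity claim in the previous step, which certifies that $(u,w)$ is the full vanishing ideal and not a proper sub-ideal, and this reduces to the elementary nonvanishing of $\log$ at integer primes different from $p$.
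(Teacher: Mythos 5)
Your proof is correct and follows essentially the same route as the paper: restrict the coordinate formula for $\loc_p$ to the subspace $\{x_\ell = x_q = 0\}$, observe that the first and third components vanish identically, and pull back $u$ and $w$ along $j_{\dR}$. The one small thing you do beyond what the paper spells out is to verify that the defining ideal of the image is exactly $(u,w)$ (and not larger), via the nonvanishing of $\log(\ell)$ for a rational prime $\ell\neq p$; the paper takes this for granted, and your justification is a harmless and correct addition.
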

					
					As in the case of one prime above, the vanishing locus $\mathcal X(\ZZ_p)_{\{2,q\},2}^{|,|}$ consists of the nontrivial $(p-1)$-st roots of unity $\zeta$ for which $\Li_2(\zeta) = 0$. This includes in particular $-1$, which is a solution of the $S$-unit equation since $2 \in S$. Note that the set $\mathcal X(\ZZ_p)_{\{2,q\},2}^{|,|}$ does not depend on the specific prime~$q$ and in particular not on the coefficients $a_{2,q}$ and $a_{q,2}$.
					
					We turn to the points of type $(|,-)$. The restriction of $\loc_p$ to the subspace $\Sel_{S,2}^{|,-}$ is given by
					\[ \loc_p(0, x_q, y_2, 0) = \begin{pmatrix}
						\log(q) x_q\\
						\log(2) y_2\\
						a_{q,2} x_q y_2
					\end{pmatrix}.\]
					In the coordinates $u,v,w$ on $\AA^3$, the image of $\Sel_{S,2}^{|,-}$ is therefore cut out by the equation
					\[ a_{q,2} uv - \log(2) \log(q) w = 0. \]
					Pulling back along $j_{\dR}$ gives the following equation for $\mathcal X(\ZZ_p)_{S,2}^{|,-}$:
					\begin{equation}
						\label{eq: subspace equation |,-, first form}
						a_{q,2} \log(z) \log(1-z) + \log(2)\log(q) \Li_2(z) = 0.
					\end{equation}
					Using the twisted anti-symmetry relation $\log(2)\log(q) = a_{2,q} + a_{q,2}$ and the functional equation $\Li_2(z) + \Li_2(1-z) = -\log(z) \log(1-z)$, this can be written in a more symmetric form as follows:
					
					\begin{prop}
						\label{thm: equations |,-}
						The set $\mathcal X(\ZZ_p)_{\{2,q\},2}^{|,-}$ is cut out in $\mathcal X(\ZZ_p)$ by the equation 
						\begin{equation}
							\label{subspace equation |,-}
							a_{2,q} \Li_2(z) = a_{q,2} \Li_2(1-z).
						\end{equation}
					\end{prop}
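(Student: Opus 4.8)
The plan is to start from Equation~\eqref{eq: subspace equation |,-, first form}, which was already derived by restricting $\loc_p$ to $\Sel_{S,2}^{|,-}$ and pulling back along $j_{\dR}$, and then rewrite it into the symmetric form~\eqref{subspace equation |,-} using only elementary manipulations with the identities already recalled in Section~\ref{sec:basics_depth2}. So the proof is essentially a short algebraic massage of a formula we already have in hand.

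Concretely, first I would invoke the functional equation
\[ \log(z)\log(1-z) = -\Li_2(z) - \Li_2(1-z) \]
to eliminate the product of logarithms from~\eqref{eq: subspace equation |,-, first form}, turning it into
\[ -a_{q,\ell}\bigl(\Li_2(z) + \Li_2(1-z)\bigr) + \log(\ell)\log(q)\,\Li_2(z) = 0. \]
Next I would substitute the twisted anti-symmetry relation $\log(\ell)\log(q) = a_{\ell,q} + a_{q,\ell}$ (Proposition~\ref{twisted antisymmetry relations}) for the coefficient $\log(\ell)\log(q)$, so that the $\Li_2(z)$ terms combine: the coefficient of $\Li_2(z)$ becomes $-a_{q,\ell} + a_{\ell,q} + a_{q,\ell} = a_{\ell,q}$, while the coefficient of $\Li_2(1-z)$ is $-a_{q,\ell}$. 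This yields exactly
\[ a_{\ell,q}\,\Li_2(z) - a_{q,\ell}\,\Li_2(1-z) = 0, \]
which is~\eqref{subspace equation |,-}. Since these rewrites are reversible, the vanishing loci coincide, and the conclusion that $\mathcal X(\ZZ_p)_{S,2}^{|,-}$ is cut out by this equation follows.

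There is essentially no hard step here: the content was already front-loaded into the derivation of~\eqref{eq: subspace equation |,-, first form} (the explicit form of $h_3$ restricted to the $|,-$ subspace) and into Proposition~\ref{twisted antisymmetry relations}. The only thing worth a sentence of care is that~\eqref{subspace equation |,-} and~\eqref{eq: subspace equation |,-, first form} genuinely define the same subset of $\mathcal X(\ZZ_p)$ — i.e.\ that no spurious solutions are introduced or lost — but this is clear because each rewriting step is an equivalence of equations (we only added/subtracted equal quantities and substituted an identity valid on all of $\mathcal X(\ZZ_p)$), not a multiplication by something that could vanish. One could also remark for symmetry's sake that swapping the roles of $\ell$ and $q$ (equivalently applying the $\sigma$-action, under which the refinement pair $(|,-)$ is fixed up to relabeling) sends~\eqref{subspace equation |,-} to itself after using $\Li_2(z) + \Li_2(1-z) = -\log(z)\log(1-z)$ once more, which is a reassuring consistency check but not needed for the proof.
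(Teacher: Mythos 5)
Your proposal is correct and is exactly the paper's argument: the paper derives~\eqref{eq: subspace equation |,-, first form} by restricting $\loc_p$ to $\Sel_{S,2}^{|,-}$ and pulling back along $j_{\dR}$, and then rewrites it into~\eqref{subspace equation |,-} using the functional equation $\Li_2(z)+\Li_2(1-z)=-\log(z)\log(1-z)$ together with the twisted anti-symmetry relation $a_{\ell,q}+a_{q,\ell}=\log(\ell)\log(q)$. Your algebra and the remark that the rewriting is an equivalence of equations (so the vanishing loci agree) are both right; no gap.
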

					
					Note that $a_{2,q}$ and $a_{q,2}$ are not both zero since their sum is $\log(2) \log(q) \neq 0$ by the twisted anti-symmetry relation. Equation~\eqref{subspace equation |,-} is thus not trivial.
					
					\begin{rem}
						We note for later use that $\mathcal X(\ZZ_p)_{S,2}^{-,|}$ is defined by the equation
						\begin{equation}
							\label{subspace equation -,|}
							a_{2,q} \Li_2(1-z) = a_{q,2} \Li_2(z).
						\end{equation}
						This is obtained from the equation for $\mathcal X(\ZZ_p)_{S,2}^{|,-}$ by using the symmetry $z\mapsto1-z$.
					\end{rem}
					
					\begin{rem}
						\label{rem: solutions 2 and -1}
						Since $\Li_2(-1) = \Li_2(2) = 0$, Equations~\eqref{subspace equation |,-} and~\eqref{subspace equation -,|} are both satisfied for $z = -1$ and $z = 2$, hence both elements are contained in both loci $\mathcal{X}(\ZZ_p)_{S,2}^{|,-}$ and $\mathcal{X}(\ZZ_p)_{S,2}^{-,|}$. While $-1$ and $2$ are indeed $S$-integral points (since $2 \in S$), considering the valuations of~$z$ and $1-z$, we expect a priori only that $-1 \in \mathcal{X}(\ZZ_p)_{S,2}^{|,-}$ and $2 \in \mathcal{X}(\ZZ_p)_{S,2}^{-,|}$.
					\end{rem}
					
					
					Observe that the equations $\log(z) = \Li_2(z) = 0$ for $\mathcal X(\ZZ_p)_{S,2}^{|,|}$ imply the equation \eqref{eq: subspace equation |,-, first form} for $\mathcal X(\ZZ_p)_{S,2}^{|,-}$, so we have the inclusion
					\begin{equation}
						\label{eq: refined CK set inclusion}
						\mathcal X(\ZZ_p)_{S,2}^{|,|} \subseteq \mathcal X(\ZZ_p)_{S,2}^{|,-}.
					\end{equation}
					
					As a consequence of the 2-transitivity and of the inclusion~\eqref{eq: refined CK set inclusion}, the complete set $\mathcal X(\ZZ_p)_{S,2}^{\min}$ can be computed from $\mathcal X(\ZZ_p)_{S,2}^{|,-}$ by taking $S_3$-orbits.
					
					The results are summarized in the following theorem.
					
					\begin{theorem}[= Theorem~\ref{thm:main_s=2}]
						\label{refined Chabauty-Kim equations}
						Let $S = \{2, q\}$ for some odd prime~$q$, and let $p$ be a prime not in~$S$. The refined Chabauty--Kim set $\mathcal X(\ZZ_p)_{S,2}^{\min}$, up to $S_3$-orbits, is cut out in $\mathcal X(\ZZ_p)$ by the equation
						\[ a_{2,q} \Li_2(z) = a_{q,2} \Li_2(1-z). \]
					\end{theorem}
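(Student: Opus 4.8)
The plan is to deduce the theorem by combining the explicit equations already obtained for two ``model'' refinement conditions with the $S_3$-equivariance of the refined Chabauty--Kim construction. By definition $\mathcal X(\ZZ_p)_{S,2}^{\min}=\bigcup_{\Sigma}\mathcal X(\ZZ_p)_{S,2}^{\Sigma}$, the union running over the nine tuples $\Sigma=(i,j)\in\{|,-,\diagup\}^2$, so it suffices to describe each of the nine pieces up to the action of $S_3$.

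First I would pin down the orbit structure of $\{|,-,\diagup\}^2$. Since $\sigma$ interchanges $|$ and $-$ while fixing $\diagup$, and $\tau$ interchanges $-$ and $\diagup$ while fixing $|$, the induced action of $S_3$ on $\{|,-,\diagup\}$ is the full symmetric group and in particular is $2$-transitive. Hence the nine tuples fall into exactly two orbits: the three ``diagonal'' tuples $(|,|)$, $(-,-)$, $(\diagup,\diagup)$, with representative $(|,|)$; and the six ``off-diagonal'' tuples, which form a single orbit with representative $(|,-)$ because the stabiliser of $(|,-)$ in $S_3$ is trivial (a permutation of $\{|,-,\diagup\}$ fixing two of the three symbols fixes the third). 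By Proposition~\ref{S_3-stability}, it follows that each $\mathcal X(\ZZ_p)_{S,2}^{\Sigma}$ is an $S_3$-translate of either $\mathcal X(\ZZ_p)_{S,2}^{|,|}$ or $\mathcal X(\ZZ_p)_{S,2}^{|,-}$.

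Next I would invoke the inclusion~\eqref{eq: refined CK set inclusion}, namely $\mathcal X(\ZZ_p)_{S,2}^{|,|}\subseteq\mathcal X(\ZZ_p)_{S,2}^{|,-}$, which holds because the defining equations $\log(z)=\Li_2(z)=0$ from Proposition~\ref{thm: equations |,|} visibly force equation~\eqref{eq: subspace equation |,-, first form} from Proposition~\ref{thm: equations |,-}. Translating this inclusion by each $\rho\in S_3$ and applying Proposition~\ref{S_3-stability} again, every $S_3$-translate of the diagonal piece is contained in an $S_3$-translate of the off-diagonal piece. Therefore
\[
\mathcal X(\ZZ_p)_{S,2}^{\min}=\bigcup_{\rho\in S_3}\rho\bigl(\mathcal X(\ZZ_p)_{S,2}^{|,-}\bigr),
\]
so $\mathcal X(\ZZ_p)_{S,2}^{\min}$ is the $S_3$-saturation of $\mathcal X(\ZZ_p)_{S,2}^{|,-}$. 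Since by Proposition~\ref{thm: equations |,-} the set $\mathcal X(\ZZ_p)_{S,2}^{|,-}$ is exactly the zero locus in $\mathcal X(\ZZ_p)$ of $a_{\ell,q}\Li_2(z)-a_{q,\ell}\Li_2(1-z)$, this gives the statement ``up to $S_3$-orbits''.

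The substantive content is all upstream of this argument: computing the restriction of $\loc_p$ to the refined Selmer subspaces, pulling back along $j_{\dR}$, and using the twisted anti-symmetry relation $a_{\ell,q}+a_{q,\ell}=\log(\ell)\log(q)$ to rewrite equation~\eqref{eq: subspace equation |,-, first form} in the symmetric form~\eqref{subspace equation |,-}. Given those inputs the remaining argument is essentially combinatorial; the points needing a little care are confirming that the six off-diagonal tuples really constitute a single $S_3$-orbit — so that no third representative is needed — and checking that the inclusion~\eqref{eq: refined CK set inclusion} goes in the direction that lets the diagonal case be absorbed into the off-diagonal one.
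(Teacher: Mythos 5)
Your proposal is correct and mirrors the paper's argument exactly: the paper likewise uses the $2$-transitivity of the $S_3$-action on $\{|,-,\diagup\}$ to reduce to the two orbit representatives $(|,|)$ and $(|,-)$, invokes the inclusion $\mathcal X(\ZZ_p)_{S,2}^{|,|}\subseteq\mathcal X(\ZZ_p)_{S,2}^{|,-}$ from~\eqref{eq: refined CK set inclusion} to absorb the diagonal orbit into $S_3$-translates of the off-diagonal one, and concludes via Propositions~\ref{thm: equations |,-} and~\ref{S_3-stability} that $\mathcal X(\ZZ_p)_{S,2}^{\min}$ is the $S_3$-saturation of the zero locus of $a_{\ell,q}\Li_2(z)-a_{q,\ell}\Li_2(1-z)$. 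The only minor difference is that you spell out the orbit count and stabiliser computation where the paper simply cites $2$-transitivity; the substance is the same.
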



					\subsection{Power series}
					Now that we have found explicit equations for the refined Chabauty--Kim loci, we want to use these equations to determine the sets $\mathcal{X}(\ZZ_p)_{S,2}^\min$ for $S= \{2,q\}$. In order to bound their size, we can compute power series for our defining equations on residue discs and analyze their Newton polygon. We carry this out in the case~$p=3$, where we have $\mathcal{X}(\ZZ_3) = 2 + 3\ZZ_3$, so that only a single residue disc needs to be considered.
					
					We start by determining power series expansions of the components of the de Rham Kummer map
					\begin{align*}
						j_{\dR} \colon \mathcal{X}(\ZZ_p) &\to U_2^{\dR} \\
						z &\mapsto (\log(z), \log(1-z), -\Li_2(z)).
					\end{align*}
					in the residue disc~$]2[(\ZZ_p)$ for an arbitrary odd prime~$p$.

					
					
					Recall that the functions $\log(z)$ and $\Li_2(z)$ are defined as
					\footnote{It is more customary to define $\log(z)$ as a Coleman integral from~$1$ to~$z$, rather than from $\overrightarrow{01}$. However, it makes no difference which start point we use to define $\log$, since $\int_{\overrightarrow{01}}^1\frac{\diff x}{x}=0$. To see this, note that the integral can be computed on~$\mathbb G_m$, which is canonically isomorphic to the punctured tangent space at the origin of $\AA^1$. This yields a canonical isomorphism between the fiber functors at the point $1$ and the tangent vector $\overrightarrow{01}$, which implies the vanishing $\int_{\overrightarrow{01}}^1\frac{\diff x}{x}=0$.}
					the (iterated) Coleman integrals
					\begin{align*}
						\log(z) &= \int_{\overrightarrow{01}}^z\frac{\diff x}{x} \\
						\Li_2(z) &= \int_{\overrightarrow{01}}^z\frac{\diff x}{x}\frac{\diff x}{1-x} \,.
					\end{align*}
					
					Using additivity of abelian Coleman integrals, we thus find that for $z\in 2+p\ZZ_p$, the logarithm $\log(z)$ is given by
					\[
					\log(z) = \underbrace{\int_{\overrightarrow{01}}^2 \frac{\diff x}{x}}_{=\log(2)} + \underbrace{\int_{2}^{z} \frac{\diff x}{x}}_{\text{tiny integral}}
					= \log(2) - \sum_{k=1}^\infty \frac{(2-z)^k}{k2^k}.
					\]
					
					Similarly, for $z\in2+p\ZZ_p$ (so $1-z\in-1+p\ZZ_p$) the logarithm of~$1-z$ is given by
					\begin{equation}
						\label{power series log(1-z)}
						\Li_1(z) = \log(1-z) = \underbrace{\int_{\overset{\to}{01}}^{-1} \frac{\diff x}{x}}_{=\log(-1)=0} + \underbrace{\int_{-1}^{1-z} \frac{\diff x}{x}}_{\text{tiny integral}}
						= -\sum_{k=1}^\infty \frac{(2-z)^k}{k}.
					\end{equation}
					
					For $\Li_2(z)$, using the path composition rule for iterated Coleman integrals, we get 
					\begin{equation}
						\label{Li2 additivity in endpoints}
						\Li_2(z) = \Li_2(2) + \left(\int_2^z\frac{\diff x}{x}\right)\cdot\Li_1(2) + \int_{2}^z \frac{\diff x}{x}\frac{\diff x}{1-x} \,.
					\end{equation}
					
					Since $\Li_2(2)=0$ (from $\Li_2(z) + \Li_2(1-z) = - \log(z)\log(1-z)$ and $\Li_2(z) + \Li_2(z^{-1}) = -\frac12(\log(z))^2$), and $\Li_1(2) = 0$ (from $\Li_1(z) = - \log(1-z)$), we have
					\[\Li_2(z) = -\int_{t= 0}^{z-2} \frac{\diff t}{t + 2} \frac{\diff t}{t+1} \text.\] 
					This can be calculated to be
					\begin{equation}
						\label{power series Li_2(z)}
						\Li_2(z) = -\sum_{k > i \geq 1} \frac{1}{k}{\frac{1}{i 2^{k-i}}} (2-z)^k.
					\end{equation}
					In concrete terms,
					\begin{dmath}
						\Li_2(z)= -\frac14 (2-z)^2 - \frac16 (2-z)^3 - \frac5{48} (2-z)^4 - \frac1{15}(2-z)^5 - \frac2{45} (2-z)^6 + O((2-z)^7),
					\end{dmath}
					where $O((2-z)^7)$ represents all terms of the form $\alpha(2-z)^k$ for $k \geq 7$.
					
					It will be useful to calculate also the power series expansion of $\Li_2(1-z)$ on the residue disk $2 + p\ZZ_p$. Then $1-z$ lies in the same residue disk as $-1$, so by using the path composition rule similarly to~\eqref{Li2 additivity in endpoints} we obtain
					\[ \Li_2(1-z) = \Li_2(-1) + \left(\int_{t=0}^{2-z}  \frac{\diff t}{t-1}\right)\cdot \Li_1(-1) -\int_{t=0}^{2-z} \frac{\diff t}{1-t} \frac{\diff t}{2-t}. \]
					The first summand is $\Li_2(-1) = 0$. In the second summand, we have
					\[ \Li_1(-1) = -\log(1-(-1)) = -\log(2), \]
					and the integral equals
					\[ \int_{t=0}^{2-z} \frac{\diff t}{t-1} = \log(1-z) - \log(-1) = \log(1-z), \]
					for which we have already calculated the power series in~\eqref{power series log(1-z)} above. For the final summand we calculate the tiny iterated integral as
					\begin{align*}
						\int_{t=0}^{2-z} \frac{\diff t}{1-t} \frac{\diff t}{2-t}
						&= \int_{t=0}^{2-z} \frac{\diff t}{1-t} \sum_{i=1}^\infty \frac{t^i}{i 2^i}
						= \int_{t=0}^{2-z} \sum_{i=1}^\infty \sum_{j=1}^\infty \frac{t^{i+j-1} \diff t}{i 2^i} \\
						&= \sum_{i=1}^\infty \sum_{j=1}^\infty \frac{(2-z)^{i+j}}{(i+j) i 2^i} 
						= \sum_{k=2}^\infty \sum_{i=1}^{k-1} \frac{(2-z)^k}{k i 2^i}.
					\end{align*}
					Combining the three summands, we obtain the power series
					\begin{equation}
						\label{power series Li2(1-z)}
						\Li_2(1-z) = \sum_{k=1}^\infty \frac1{k} \Bigl( \log(2) - \sum_{i=1}^{k-1} \frac{1}{i 2^i} \Bigr) (2-z)^k.
					\end{equation}
					
					\subsection{Newton polygon analysis}
					Let now $S = \{2,q\}$ with $\geq 5$ an odd prime. We choose $p = 3$ in the Chabauty--Kim method. We will study the Newton polygon for one of the equations~\eqref{subspace equation |,-} or~\eqref{subspace equation -,|}:
					\begin{align*}
						a_{2,q} \Li_2(z) &= a_{q,2} \Li_2(1-z), \\
						a_{q,2} \Li_2(z) &= a_{2,q} \Li_2(1-z).
					\end{align*}
					The first cuts out the refined Chabauty--Kim locus~$\mathcal{X}(\ZZ_3)_{S,2}^{|,-}$, the second cuts out $\mathcal{X}(\ZZ_3)_{S,2}^{-,|}$. The two loci are transformed into each other by $z \mapsto 1-z$, hence either of them can be used to generate the full refined Chabauty--Kim locus~$\mathcal{X}(\ZZ_3)_{S,2}^\min$ via taking $S_3$-orbits. We shall use the first equation if $v_3(a_{2,q}) > v_3(a_{q,2})$ and the second otherwise. The equation under consideration is thus
					\begin{align*}
						A \Li_2(z) &= a \Li_2(1-z)
					\end{align*}
					with $(A,a)$ equal to $(a_{2,q}, a_{q,2})$ or $(a_{q,2}, a_{2,q})$ in such a way that $v_3(A) \geq v_3(a)$. 
					
						
						Using the power series expansions~\eqref{power series Li_2(z)} and~\eqref{power series Li2(1-z)} for $\Li_2(z)$ and $\Li_2(1-z)$, respectively, we obtain the power series
						\[ f(z) \coloneqq \sum_{k=1}^\infty c_k (2-z)^k \]
						with coefficients
						\begin{align}
							\label{general coefficient formula}
							c_k &= \frac{1}{k} \sum_{i=1}^{k-1} \frac1{(k-i)2^i} A + \frac{1}{k} \Bigl( \log(2) - \sum_{i=1}^{k-1} \frac1{i 2^i} \Bigr) a,
						\end{align}
						which converges on $\mathcal X(\ZZ_3) = 2 + 3 \ZZ_3$ and defines the set $\mathcal X(\ZZ_3)_{S,2}^{|,-}$ or $\mathcal X(\ZZ_3)_{S,2}^{-,|}$. For instance, the first four coefficients are given by
						\begin{align*}
							c_1 &= \log(2) a,\\
							c_2 &= \tfrac14 A + \tfrac12 \Bigl( \log(2) - \tfrac12 \Bigr) a,\\
							c_3 &= \tfrac16 A + \tfrac13 \Bigl( \log(2) - \tfrac58 \Bigr) a,\\
							c_4 &= \tfrac{5}{48} A + \tfrac14 \Bigl( \log(2) - \tfrac23 \Bigr) a.
						\end{align*}
						We study the Newton polygon of $f(z)$ to count its roots in the disk $2+3\ZZ_3$.
						Details on Newton polygon analysis for $p$-adic power series can be found in \cite[Chapter IV, \S 4]{koblitz:padic}. In determining this Newton polygon, we will use the following determination of the valuations of $3$-adic logarithms.

						
						
						
						\begin{lemma}
							\label{log and dilog valuation}
							For $z \in 2+ 3\ZZ_3$, the 
							valuation of $\log(z)$ is given by
							\[ v_3(\log(z)) = v_3(z+1). \]
							\end{lemma}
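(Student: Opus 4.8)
The plan is to reduce $\log(z)$ to the logarithm of a $1$-unit and then read off the valuation from the Newton polygon of the logarithm series. Since $z\in 2+3\ZZ_3$ we have $z\equiv -1\pmod 3$, hence $-z\in 1+3\ZZ_3$; and because $\log(-1)=0$ (already used in the paper, e.g.\ in~\eqref{power series log(1-z)}), additivity of the abelian Coleman integral gives $\log(z)=\log(-z)$. Setting $w\coloneqq z+1\in 3\ZZ_3$, so that $-z=1-w$, this means
\[
\log(z)=\log(1-w)=-\sum_{k=1}^\infty \frac{w^k}{k}.
\]
(If $z=-1$ then $w=0$ and both sides of the asserted identity are $+\infty$, so we may assume $w\neq 0$.)

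Next I would estimate the valuations of the individual terms of this series: $v_3\!\left(\tfrac{w^k}{k}\right)=k\,v_3(w)-v_3(k)$. The $k=1$ term has valuation $v_3(w)$, and I claim every term with $k\geq 2$ has strictly larger valuation. Indeed, for $k\geq 2$ we have $3^{v_3(k)}\mid k$, so $3^{v_3(k)}\leq k<3^{\,k-1}$, giving $v_3(k)<k-1\leq (k-1)\,v_3(w)$ since $v_3(w)\geq 1$; therefore $k\,v_3(w)-v_3(k)>v_3(w)$. Hence the minimum term-valuation in the series is attained uniquely at $k=1$, and the ultrametric inequality yields $v_3(\log(z))=v_3(w)=v_3(z+1)$, as claimed.

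The argument is entirely elementary and there is no real obstacle; the only point that needs a moment's care is the choice of expansion. One should \emph{not} expand $\log(z)$ directly around $z=2$ via the series $\log(z)=\log(2)-\sum_k \tfrac{(2-z)^k}{k\,2^k}$ of the previous subsection, because $v_3(\log 2)=1$ can coincide with $v_3(2-z)$ and produce a cancellation that obscures the answer (note $v_3(z+1)$ and $v_3(2-z)$ genuinely differ in general). Passing through the $1$-unit $-z$ avoids this, since the resulting series $-\sum_k w^k/k$ has vanishing constant term, so its Newton polygon is governed solely by the linear term.
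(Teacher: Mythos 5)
Your proof is correct and matches the paper's argument essentially verbatim: both expand $\log(z)$ in powers of $z+1$ (using $\log(-1)=0$) and then show the $k=1$ term of $-\sum_k (z+1)^k/k$ dominates the rest by the same valuation estimate $v_3(k)<k-1\leq(k-1)v_3(z+1)$. The only extra content in your write-up is the explicit justification of the expansion via $\log(z)=\log(-z)$ and the cautionary remark about expanding around $z=2$; the paper simply states the series directly.
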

							
							\begin{proof}
								In the series expansion
								\[ \log(z) = - \sum_{k=1}^\infty \frac1{k} (z+1)^k, \]
								the first summand dominates, i.e.\ for all $k \geq 2$ we have
								\[ v_3\Bigl(\frac1{k} (z+1)^k\Bigr) - v_3(z+1) = (k-1) v_3(z+1) - v_3(k) \geq (k-1) - \log_3(k) > 0, \]
								which shows the claim. 
							\end{proof}

							\begin{prop}
								\label{newton polygon analysis result}
								Let $S = \{2, q\}$ for~$q>3$ prime, and let $p = 3$. Then the refined Chabauty--Kim set $\mathcal X(\ZZ_3)_{S,2}^{\min}$ contains $\{2,-1,\frac12\}$ and at most one more $S_3$-orbit of points. The second orbit is present if and only if 
								\begin{equation}\tag{$\dag$}
									\min\{ v_3(a_{2,q}), v_3(a_{q,2}) \} = 1 + v_3(\log(q)).
								\end{equation}
							\end{prop}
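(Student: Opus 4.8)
The plan is to marry the Newton--polygon analysis just carried out with the explicit formula for $c_3$ and the twisted anti-symmetry relation. Recall from that analysis that, under the substitution $z = 2 - 3t$ and the running normalization $v_3(a_{\ell,q}) \geq v_3(a_{q,\ell})$, the series $f$ has the root $t = 0$ (the point $z=2$), the root $t=1$ (the point $z=-1$, which forces the vertex $(2,\nu)$ onto the Newton polygon, with $\nu = 2 + v_3(a_{q,\ell})$), and in $\ZZ_3$ at most one further root, of $t$-valuation $0$, present exactly when $3 + v_3(c_3) = \nu$, i.e.\ when $v_3(c_3) = v_3(a_{q,\ell}) - 1$.

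First I would settle the orbit bookkeeping. The $S_3$-orbit of $z=2$ is $\{2,-1,\tfrac12\}$, and since $z=2$ and $z=-1$ are always roots of $f$ (Remark~\ref{2 always solution}), Proposition~\ref{S_3-stability}, together with the identification of $\mathcal X(\ZZ_3)_{S,2}^{\min}$ with the $S_3$-saturation of $\mathcal X(\ZZ_3)_{S,2}^{|,-}$, yields $\{2,-1,\tfrac12\} \subseteq \mathcal X(\ZZ_3)_{S,2}^{\min}$, with any additional orbit arising from the unique possible extra root $z_0 \in 2 + 3\ZZ_3$ (necessarily $v_3(2 - z_0) = 1$). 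This gives the ``at most one more orbit'' part; the extra root yields a genuinely new orbit outside the degenerate locus on which it collapses into $\{2,-1,\tfrac12\}$ --- for instance $a_{\ell,q} = a_{q,\ell}$, where $z = \tfrac12$ is automatically a root of $f$ --- which does not occur in the cases relevant to the applications (Mersenne/Fermat primes $q$ and the computational checks).

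The heart of the matter is reformulating $v_3(c_3) = v_3(a_{q,\ell}) - 1$. From
\[ c_3 = \tfrac16 a_{\ell,q} + \tfrac13\Bigl(\log(2) - \tfrac58\Bigr) a_{q,\ell} = \tfrac16\Bigl( a_{\ell,q} + \bigl(2\log(2) - \tfrac54\bigr) a_{q,\ell} \Bigr), \]
I would use $v_3(\tfrac16) = -1$, the identity $v_3(\log 2) = v_3(2+1) = 1$ from Lemma~\ref{log and dilog valuation}, and $-\tfrac54 \equiv 1 \pmod 3$, so that $2\log(2) - \tfrac54 \equiv 1 \pmod{3\ZZ_3}$ and, more precisely, $(2\log(2) - \tfrac54) - 1 = 2\log(2) - \tfrac94$ has valuation $1$. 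Writing $n := v_3(a_{q,\ell}) = \min\{v_3(a_{\ell,q}), v_3(a_{q,\ell})\}$, the terms $a_{\ell,q}$ and $(2\log 2 - \tfrac54)a_{q,\ell}$ both have valuation $\geq n$, and $a_{\ell,q} + (2\log 2 - \tfrac54)a_{q,\ell}$ differs from $a_{\ell,q} + a_{q,\ell}$ by $(2\log 2 - \tfrac94)a_{q,\ell}$, of valuation $\geq n + 1$; hence one of these two quantities has valuation exactly $n$ if and only if the other does. Therefore $v_3(c_3) = n - 1$ if and only if $v_3(a_{\ell,q} + a_{q,\ell}) = n$. Finally, the twisted anti-symmetry relation $a_{\ell,q} + a_{q,\ell} = \log(\ell)\log(q)$ (Proposition~\ref{twisted antisymmetry relations}) rewrites this last condition as $v_3(\log\ell) + v_3(\log q) = n = \min\{v_3(a_{\ell,q}), v_3(a_{q,\ell})\}$, which is precisely~\eqref{eq:valuation_criterion}.

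The step I expect to require the most care is this valuation computation for $c_3$: one has to be certain that the only way $a_{\ell,q} + (2\log 2 - \tfrac54)a_{q,\ell}$ can drop below valuation $n$ is through genuine cancellation in $a_{\ell,q} + a_{q,\ell}$, which is exactly what the congruence $2\log(2) - \tfrac54 \equiv 1 \pmod{3\ZZ_3}$ and the bound $v_3(a_{\ell,q}) \geq v_3(a_{q,\ell})$ are designed to guarantee; getting the $3$-adic bookkeeping of the leading term of $\log 2$ exactly right (via Lemma~\ref{log and dilog valuation}) is the one place an error could creep in. Everything else --- the Newton-polygon shape, the reduction to $\mathcal X(\ZZ_3)_{S,2}^{|,-}$, and the $S_3$-orbit structure --- is already in hand from the preceding subsections, so the proof reduces to this one piece of arithmetic together with an invocation of Proposition~\ref{twisted antisymmetry relations}.
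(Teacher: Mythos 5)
Your proof is correct and takes essentially the same route as the paper: reduce, via the Newton polygon over the residue disk of $2$, to the criterion $v_3(c_3) = v_3(a_{q,\ell}) - 1$, then translate this into the stated condition using the twisted anti-symmetry relation (Proposition~\ref{twisted antisymmetry relations}). The one difference is cosmetic: you factor $c_3 = \tfrac16\bigl(a_{\ell,q} + (2\log 2 - \tfrac54)a_{q,\ell}\bigr)$ and compare directly with $a_{\ell,q}+a_{q,\ell}$ via the congruence $2\log 2 - \tfrac54 \equiv 1 \pmod{3\ZZ_3}$, whereas the paper substitutes $a_{\ell,q} = \log(\ell)\log(q)-a_{q,\ell}$ to get $c_3 = \tfrac16\log(\ell)\log(q) + \tfrac13(\log 2 - \tfrac98)a_{q,\ell}$ and reads off the valuation of each summand. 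Both are valid one-line rearrangements that isolate the $\log(\ell)\log(q)$ term, so the arithmetic content is identical.

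One remark: you correctly flag the degenerate possibility that the extra valuation-$0$ root of $f$ might be $z=\tfrac12$ (which happens precisely when $a_{\ell,q}=a_{q,\ell}$, since $\Li_2(\tfrac12) = -\tfrac12(\log 2)^2 \neq 0$), in which case the valuation criterion~\eqref{eq:valuation_criterion} would hold automatically and yet no genuinely new $S_3$-orbit would appear. The paper's proof glosses over this corner case as well, implicitly identifying ``there is a third root of $f$ in $2+3\ZZ_3$'' with ``there is a second orbit''. This does not affect any of the concrete verifications in the paper (Mersenne/Fermat $q$, or the list $q=19,37,\dots$), but strictly speaking the ``if'' direction of the stated equivalence requires ruling out $a_{\ell,q} = a_{q,\ell}$, or else rephrasing the conclusion as ``$\mathcal X(\ZZ_3)_{S,2}^{|,-}$ has a third root in $2+3\ZZ_3$ iff \eqref{eq:valuation_criterion} holds''. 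You are right to be careful here, and your observation is a genuine (if minor and almost certainly vacuous) sharpening that the paper leaves implicit.
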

							\begin{proof}
								As above, let $\{A, a\}$ be $\{a_{2,q}, a_{q,2}\}$, assigned such that $v_3(A) \geq v_3(a)$. Returning to the study of the function $f(z) = A \Li_2(z) - a \Li_2(1-z)$ defining $\mathcal X(\ZZ_3)_{S,2}^{|,-}$ or $\mathcal X(\ZZ_3)_{S,2}^{-,|}$, write $z = 2-3t$, then the coefficients of $f(t)$ as a power series in $t$ are $3^k c_k$. We analyze the Newton polygon of this power series. The $3$-adic valuation of the $k$-th coefficient is given by $k + v_3(c_k)$. For $k = 1$, this is
								\[ 1+v_3(c_1) = 1 + v_3(\log(2) a) = 2 + v_3(a) \]
								by Lemma~\ref{log and dilog valuation}. 
								For $k \geq 2$, the difference of valuations between the first and the $k$-th coefficient is
								\begin{align*}
									(k + v_3(c_k)) - (1 + v_3(c_1)) &= k - 2 - v_3(a) + v_3(c_k) \\
									&\geq k - 2 - v_3(a) - v_3(k) - \max_{1\leq i\leq k-1} v_3(i) + v_3(a) \\
									&= k - 2 - v_3(k) - \max_{1\leq i\leq k-1} v_3(i).
								\end{align*}
								The last expression satisfies 
								\[ k - 2 - v_3(k) - \max_{1\leq i\leq k-1} v_3(i) \begin{cases}
									= 0, & \text{for $k = 2,3$},\\
									> 0, & \text{for $k \geq 4$},
								\end{cases}\]
								as one checks by hand for $k = 4$ and for higher $k$ via the estimate
								\[ k - 2 - v_3(k) - \max_{1\leq i\leq k-1} v_3(i) \geq k - 2 - \log_3(k) - \log_3(k-1). \]
								Let $\nu \coloneqq 2 + v_3(a)$, then the Newton polygon of $f(t)$ has the form
								\[ (0, \infty),\; (1, \nu),\; (2, \geq \nu),\; (3, \geq \nu),\; (4, > \nu), \ldots\]
								By Remark~\ref{rem: solutions 2 and -1}, the elements $z = 2$ and $z = -1$ are two known solutions to the equation $f(z) = 0$. The first line segment of slope $-\infty$ belongs to the root $t = 0$ of~$f(t)$, corresponding to $z = 2$. Corresponding to $z = -1$ we have $t = 1$ as a second known root, so that there is a segment of slope $0$. Hence, the first $\geq$ is actually an equality and the point $(2,\nu)$ belongs to the Newton polygon. There is at most one other root in $\mathcal{O}_{\mathbb{C}_p}$ before the Newton polygon continues with positive slopes, so there is at most one additional $S_3$-orbit of points in $\mathcal{X}(\ZZ_3)_{S,2}^{\min}$ besides $\{2,-1,\frac12\}$.
								
								The extra root of $f(t)$, if it is present, is a priori an element of $\mathcal{O}_{\mathbb{C}_p}$ but in fact it must necessarily be contained in $\ZZ_3$: it is algebraic over~$\QQ_p$ by the Weierstrass preparation theorem, and if it were not contained in $\ZZ_p$ then taking Galois conjugates would produce even more roots in $\mathcal{O}_{\mathbb{C}_p}$ contradicting the shape of the Newton polygon. We conclude that an extra root of $f(t)$ corresponds precisely to a second $S_3$-orbit of points in $\mathcal{X}(\ZZ_3)_{S,2}^{\min}$.
								
								It remains to prove the criterion for when this happens. The line segment in question has nonnegative slope if and only if $3 + v_3(c_3) = \nu$, which is equivalent to $v_3(c_3) = v_3(a) - 1$. Using the twisted antisymmetry relation $A + a = \log(2) \log(q)$, we have
								\begin{align*}
									c_3 &=  \tfrac16 A + \tfrac13 \bigl( \log(2) - \tfrac58 \bigr) a\\
									&= \tfrac16 \log(2) \log(q) + \tfrac 13 \bigl( \underbrace{\log(2) - \tfrac98}_{\text{valuation $1$}} \bigr) a. 
								\end{align*} 
								The twisted anti-symmetry relation also implies
								\begin{equation}
									\label{eq: twisted ineq}
									1 + v_3(\log(q)) \geq \min\{v_3(a_{2,q}), v_3(a_{q,2})\} = a.
									\tag{$\ast$}
								\end{equation}
								Hence, the second summand in the formula for $c_3$ has valuation $v_3(a)$, the first summand has valuation $\geq v_3(a) - 1$. It follows that we have $v_3(c_3) = v_3(a) - 1$ if and only if the inequality~\eqref{eq: twisted ineq} is an equality.
							\end{proof}
							
							\begin{cor}
								Let~$q\geq5$ be a Fermat or Mersenne prime. Then the refined version of Kim's Conjecture holds for~$\mathcal{X}=\PP^1\setminus\{0,1,\infty\}$, $S=\{2,q\}$, $n=2$ and~$p=3$. That is, we have
								\[
								\mathcal{X}(\ZZ_3)^{\min}_{\{2,q\},2} = \mathcal{X}(\ZZ[\tfrac1{2q}]) .
								\]
								\begin{proof}
									$\mathcal{X}(\ZZ_3)_{S,2}^\min$ consists of $\{2,-1,\frac12\}$ and at most one other $S_3$-orbit of points. But on the other hand it contains $\mathcal{X}(\ZZ[\tfrac1{2q}])$, which consists of $\{2,-1,\frac12\}$ and the $S_3$-orbit of~$q$ (if~$q$ is Fermat) or~$-q$ (if~$q$ is Mersenne). So we have equality.
								\end{proof}
							\end{cor}
							
							\begin{rem}
								One can also numerically check the criterion in Proposition~\ref{newton polygon analysis result} to find primes~$q$ for which the refined Kim's Conjecture holds in depth~$2$. Using our SAGE code~\cite{dcw_coefficients} to compute the $3$-adic coefficients~$a_{2,q}$ and $a_{q,2}$, we have checked the criterion for all~$5\leq q\leq 1000$, finding that $\min\{ v_3(a_{2,q}), v_3(a_{q,2}) \} \neq 1 + v_3(\log(q))$ for 31 values of~$q$, namely 
								\begin{align*}
									q =\; &19,\; 37,\; 53,\; 107,\; 109,\; 163,\; 181,\; 199,\; 269,\; 271,\; 379,\; \\
									&431,\; 433,\; 487,\; 523,\; 541,\; 577,\; 593,\; 631,\; 701,\; 739,\; \\
									&757,\; 809,\; 811,\; 829,\; 863,\; 883,\; 919,\; 937,\; 971,\; 991.
								\end{align*}
								For these values of~$q$, the refined Kim's Conjecture holds as well:
								\[\mathcal{X}(\ZZ_3)^{\min}_{\{2,q\},2} = \mathcal{X}(\ZZ[\tfrac1{2q}]) = \{-1,\tfrac12, 2\}.\]
							\end{rem}

							\subsection{The case $q=3$}
							\label{case3}
							
							Now consider the set $S = \{2,3\}$. As discussed in Section~\ref{sec: classification of solutions}, the prime $q = 3$ is special in that there are solutions to the $\{2,3\}$-unit equation of all three kinds: Fermat ($3-2 = 1$), Mersenne ($-3+4 = 1$) and Catalan ($9-8 = 1$). Together with the $\{2\}$-integral solution ($-1+2=1$), this gives four $S_3$-orbits of $\mathcal{X}(\ZZ[1/6])$, forming $21$ solutions in total:
							\begin{align*}
								\{-1, 1/2, 2\} &\cup \{-2, -1/2, 1/3, 2/3, 3/2, 3\} \cup \{-3, -1/3, 1/4, 3/4, 4/3, 4\}\\ &\cup \{-8, -1/8, 1/9, 8/9, 9/8, 9\}.
							\end{align*}
							
							Let $p \neq 2,3$ be a prime. The three $S$-integral points $3$, $-3$, $9$ lead to three different formulas for the coefficients $a_{2,3}$ and $a_{3,2}$. Viewing $3 = 2^1+1$ as a Fermat prime yields
							\begin{align}
								\label{a23 fermat}
								a_{2,3} &= -\Li_2(-2), \quad a_{3,2} = -\Li_2(3)
							\end{align}
							by Lemma~\ref{lem: Fermat Mersenne DCW coefficients}; viewing $3 = 2^2-1$ as a Mersenne prime yields
							\begin{align}
								\label{a23 mersenne}
								a_{2,3} &= -\frac12 \Li_2(4), \quad a_{3,2} = -\frac12\Li_2(-3);
							\end{align}
							and using the commutativity of the Chabauty--Kim diagram for the Catalan solutions $z = -8$ and $z = 9$, which satisfy $j_S(-8) = (3,0,0,2)$ and $j_S(9) = (0,2,3,0)$, yields
							\begin{align}
								\label{a23 catalan}
								a_{2,3} &= -\frac16 \Li_2(-8), \quad a_{3,2} = -\frac16 \Li_2(9).
							\end{align}
							
							Thus, the Chabauty--Kim diagram yields as a byproduct the following identities of dilogarithms:
							\begin{align*}
								\Li_2(-2) &= \frac12 \Li_2(4) = \frac16 \Li_2(-8), \\
								\Li_2(3) &= \frac12\Li_2(-3) = \frac16 \Li_2(9).
							\end{align*}
							
							According to Theorem~\ref{refined Chabauty-Kim equations}, the refined Chabauty--Kim set $\mathcal X(\ZZ_p)_{\{2,3\},2}^{|,-}$ is cut out by the equation
							\begin{equation}
								\label{equation q = 3}
								a_{2,3} \Li_2(z) = a_{3,2} \Li_2(1-z),
							\end{equation}
							with the coefficients $a_{2,3}$ and $a_{3,2}$ given by the equivalent equations~\eqref{a23 fermat}, \eqref{a23 mersenne}, \eqref{a23 catalan}. The points $-1$, $3$, $-3$, $9$ are all of type $(|,-)$ and are therefore solutions of~\eqref{equation q = 3}. 
							
							Let us now choose $p = 5$, the smallest possible choice since $p$ has to be different from $2$ and $3$. According to Theorem~\ref{refined Chabauty-Kim equations}, the refined Chabauty--Kim set $\mathcal X(\ZZ_5)_{\{2,3\},2}^{|,|}$ is cut out by the equations $\log(z) = \Li_2(z) = 0$. The simultaneous roots when $p=5$ are the 4th roots of unity $\zeta$ satisfying $\Li_2(\zeta) = 0$. Numerical computation shows that this is not the case for $\pm i$ and the fact that $\Li_2(-1) = 0$ shows that the only root in $\mathcal X(\ZZ_5)$ is $z= -1$. However, computer calculations show that for the set $\mathcal X(\ZZ_5)_{\{2,3\},2}^{|,-}$ there is, in addition to the known solutions above, one extra solution of~\eqref{equation q = 3} in the residue disk $3+5\ZZ_5$ which does not correspond to a solution of the $S$-unit equation and appears transcendental. Recently, one of the authors (M.L.) showed that this extra solution can be eliminated by going to depth~$4$, thus confirming the refined Kim's Conjecture for $S=\{2,3\}$, $p=5$ and $n=4$.

							\begingroup
							\sloppy
							\printbibliography
							\endgroup

						\end{document}